\documentclass[10pt]{article}
\usepackage{amsfonts}
\usepackage{hyperref}
\usepackage{mathrsfs,enumerate,amssymb,amsmath,color,lineno}
\usepackage{indentfirst}
\usepackage{amsmath}
\usepackage{amssymb}
\usepackage[amsmath,thmmarks]{ntheorem}
\usepackage{graphicx}
\usepackage{tikz}

\newtheorem{definition}{\bf Definition}[section]
\newtheorem{lemma}{\bf Lemma}[section]
\newtheorem{theorem}{\bf Theorem}[section]
\newtheorem{remark}{\bf Remark}[section]
\newtheorem{corollary}{\bf Corollary}[section]
\newtheorem{example}{\bf Example}[section]

\newtheorem{proposition}{\bf Proposition}[section]

\begin{document}
\setcounter{page}{1}

\title{{\textbf{Associativity of discretized triangular norms}}\thanks {Supported by
the National Natural Science Foundation of China (No. 12471440))}}
\author{Di Zhang\footnote{\emph{E-mail address}: 2498586243@qq.com}, Xue-ping Wang\footnote{Corresponding author. xpwang1@hotmail.com; fax: +86-028-84761502},\\
\emph{School of Mathematical Sciences, Sichuan Normal University,}\\
\emph{Chengdu 610066, Sichuan, People's Republic of China}}

\newcommand{\pp}[2]{\frac{\partial #1}{\partial #2}}
\date{}
\maketitle
\begin{quote}
{\bf Abstract}
In this article, we mainly investigate when the binary operation $T^{[n]}:[0, 1]^2\rightarrow [0, 1]$ satisfies the associative law and thus becomes a triangular norm where \begin{equation*}
T^{[n]}(x,y)=\left\{\begin{array}{ll}   f_{n}(T(f_{n}(x),f_{n}(y))),  &\text{if}\,\, (x,y)\in{[0,1)^{2}},   \\
\min\{x,y\},  &\text{otherwise,}  \end{array}\right.
\end{equation*} $f_{n}:[0,1]\rightarrow\left\{0, \frac{1}{n}, \frac{2}{n}, \dots, \frac{n-1}{n}, 1\right\}$ given by $f_{n}(x)=\frac{1}{n}{\lfloor nx \rfloor}$ with $n$ a positive integer and $T:[0, 1]^2\rightarrow [0, 1]$ is a triangular norm. We present some sufficient conditions for $T^{[n]}$ to be triangular norms, and for $T^{[n]}$ to be never associative when $T$ is strictly monotone.

{\textbf{\emph{Keywords}}:}  Triangular norm; Discrete triangular norm; Associativity \\
\end{quote}

\section{Introduction}
Triangular norms (t-norms for short) are fundamental aggregation operators on the unit interval $[0,1]$, which play an important role in fuzzy logic, probabilistic metric spaces and many other areas \cite{KM1942,BS1960,BS1961,EP2000}. In many applications, it is desirable to replace the continuous truth-value set $[0,1]$ by a finite chain or poset \cite{GM1993,BD1998b,BD1999}. This leads naturally to the study of discrete t-norms and finite-valued aggregation operators \cite{EP2000,GM2005,AM2007,JC2011}. For a positive integer $n$, consider the finite chain
$\Gamma_{(n)} = \left\{0, \frac{1}{n}, \frac{2}{n}, \dots, \frac{n-1}{n}, 1\right\}$.
A natural approach to constructing a finite-valued version of a given t-norm $T:[0, 1]^2\rightarrow [0, 1]$ is to discretize its arguments and its output by means of the floor function
$f_n(x)=\frac1n\lfloor nx\rfloor$.
Following \cite{EP2000}, this leads to the binary operation $T^{[n]}:[0, 1]^2\rightarrow [0, 1]$ defined by
\begin{equation*}
T^{[n]}(x,y)=\left\{\begin{array}{ll}   f_{n}(T(f_{n}(x),f_{n}(y))),  &\text{if}\,\, (x,y)\in{[0,1)^{2}},   \\
\min\{x,y\},  &\text{otherwise,}  \end{array}\right.
\end{equation*}
 which automatically inherits commutativity, monotonicity, and the neutral element from the t-norm $T$, whereas the associativity is not preserved in general. Thus, the central question is to determine when $T^{[n]}$ is associative and hence itself a t-norm. The difficulty comes from the interaction between the algebraic structure of $T$ and the discretization process. Observe that the map $f_n$ identifies a whole interval of points with a single element of $\Gamma_{(n)}$, so that some information is lost. Although the loss becomes smaller as $n$ increases, it is sufficient to affect the associativity of $T^{[n]}$. It was claimed in \cite[p.100]{EP2000} that if $T$ is strictly monotone, then $T^{[n]}$ is never associative for $n\geq2$. However, we find that this assertion is not always valid by a counterexample. This observation motivates us to explore the conditions governing the associativity of $T^{[n]}$.

The remainder of this article is organized as follows. Section 2 recalls the necessary concepts and preliminary results. Section 3 contains the main results on the associativity of $T^{[n]}$, including sufficient conditions for its preservation, the non-associativity results for $n\geq4$. A conclusion is drawn in Section 4.

\section{Preliminaries}
In this section, we review some necessary concepts and results. For more comprehensive presentation, see book \cite{EP2000}.
\begin{definition}[\cite{EP2000,CA2006}]\label{defi 2.1}
\emph{A \emph{triangular norm} (\emph{t-norm} for short) is a binary operator $T:[0, 1]^2\rightarrow [0, 1]$ such that for all $x, y, z\in[0, 1]$ the following conditions are satisfied:}

$(T1)$  $T(x,y)=T(y,x)$,

$(T2)$  $T(T(x,y),z)=T(x,T(y,z))$,

\emph{$(T3)$  $T(x,y)\leq T(x,z)$ whenever $y\leq z$,}

$(T4)$  $T(x,1)=T(1,x)=x$.
\end{definition}

The associativity (T2) allows us to extend each t-norm $T$ in a unique way to an $n$-ary operation in usual way by induction. For each $x\in{[0,1]}$, we define $$x_{T}^{(0)}=1, x_{T}^{(1)}=x, x_{T}^{(n+1)}=T(x_{T}^{(n)},x) \mbox{ where }n\in \mathbb{N}=\{1, 2, \cdots, n,\cdots\}.$$

\begin{definition}[\cite{EP2000,CA2006}]\label{defi 2.2}
\emph{A \emph{triangular conorm} (\emph{t-conorm} for short) is a binary operator $S:[0, 1]^2\rightarrow [0, 1]$ such that for all $x, y, z\in[0, 1]$ the following conditions are satisfied:}

$(S1)$ $S(x,y)=S(y,x)$,

$(S2)$  $S(S(x,y),z)=S(x,S(y,z))$,

\emph{$(S3)$  $S(x,y)\leq S(x,z)$ whenever $y\leq z$,}

$(S4)$ $S(x,0)=S(0,x)=x$.
\end{definition}

Let $f_{n}(x)=\frac{1}{n}{\lfloor nx \rfloor}$, where $n \in {\mathbb{N}}$ and ${\lfloor nx \rfloor}$ is the greatest integer which is smaller than or equal to $nx$. After a simple computation, we can obtain
\begin{equation}\label{eq 03}
f_{n}(x)=\frac{1}{n}{\lfloor nx \rfloor}=\left\{\begin{array}{rcl}   0,  &\text{}x\in{[0,\frac{1}{n})},   \\\\
\frac{1}{n}, &\text{}x\in{[\frac{1}{n},\frac{2}{n})},\\\\
 \frac{2}{n}, &\text{}x\in{[\frac{2}{n},\frac{3}{n})},\\
 \vdots &\text{}\vdots\\
 \frac{n-1}{n}, &\text{}x\in{[\frac{n-1}{n},1)},\\
 1, &\text{}x=1.\\
  \end{array}\right.
 \end{equation}
Obviously, $f_{n}:[0,1]\rightarrow{\Gamma_{(n)}}$ is a non-decreasing and right-continuous function, where $$\Gamma_{(n)} = \left\{0, \frac{1}{n}, \frac{2}{n}, \dots, \frac{n-1}{n}, 1\right\}.$$

\begin{lemma}[\cite{EP2000}]\label{lem 2.1}
For each $n \in \mathbb{N}$ define the function $f_{n}:[0,1]\rightarrow{\Gamma_{(n)}}$ by $f_{n}(x)=\frac{1}{n}{\lfloor nx \rfloor}$. Let $T:[0,1]^{2}\rightarrow{[0,1]}$ be a t-norm. Then the binary operation $T^{[n]}:[0,1]^{2}\rightarrow{[0,1]}$ given by
\begin{equation}\label{eq 04}
T^{[n]}(x,y)=\left\{\begin{array}{ll}   f_{n}(T(f_{n}(x),f_{n}(y))),  &\text{if}\,\, (x,y)\in{[0,1)^{2}},   \\
\min\{x,y\},  &\text{otherwise.}  \end{array}\right.
\end{equation}
satisfies the axioms (T1), (T3) and (T4), but not necessarily (T2).
\end{lemma}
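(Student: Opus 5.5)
We check (T1), (T3) and (T4) directly from the definition of $T^{[n]}$ in (\ref{eq 04}), using the corresponding properties of $T$ together with two facts about $f_n$: it is non-decreasing, and $f_n(x)\le x$ for all $x\in[0,1]$, with $f_n(x)<1$ whenever $x<1$. For the failure of (T2) we exhibit a single counterexample.

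\emph{(T1).} Both branches of (\ref{eq 04}) are symmetric in $x,y$. The domain $[0,1)^2$ is symmetric, $\min$ is commutative, and
\[
f_n(T(f_n(x),f_n(y)))=f_n(T(f_n(y),f_n(x)))
\]
by (T1) for $T$.

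\emph{(T4).} If $y=1$, then $(x,1)\notin[0,1)^2$, so $T^{[n]}(x,1)=\min\{x,1\}=x$. The case $T^{[n]}(1,x)=x$ follows in the same way or by (T1).

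\emph{(T3).} Let $y\le z$; we show $T^{[n]}(x,y)\le T^{[n]}(x,z)$.
\begin{enumerate}[(i)]
\item If $x=1$, both sides equal $\min$, which is monotone.
\item If $x<1$ and $z<1$, then $y<1$ as well. Since $f_n$ is non-decreasing, $f_n(y)\le f_n(z)$. By (T3) for $T$ and monotonicity of $f_n$ again, the inequality holds.
\item If $x<1$, $z=1$ and $y<1$, the right side is $\min\{x,1\}=x$. By (T3) and (T4) for $T$, the left side is $f_n(T(f_n(x),f_n(y)))\le f_n(T(f_n(x),1))=f_n(f_n(x))=f_n(x)\le x$.
\item If $x<1$ and $y=z=1$, the two sides are equal.
\end{enumerate}
Case (iii), where the two branches of the definition meet, is the only point needing care. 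It is handled by $f_n(x)\le x$ and by the idempotence $f_n\circ f_n=f_n$ on $\Gamma_{(n)}$.

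\emph{(T2) may fail.} We choose the Łukasiewicz t-norm $T_L(x,y)=\max\{x+y-1,0\}$ and look for $n$ and points exhibiting non-associativity. For $T_L$, the discretization is exact: $T_L$ maps $\Gamma_{(n)}^2$ into $\Gamma_{(n)}$, so $T_L^{[n]}$ is associative. We therefore take the product t-norm $T_P(x,y)=xy$ instead, for which $f_n(ab)\neq ab$ in general.

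Take $n=3$ and $x=y=z=\frac{2}{3}$, all in $[0,1)$. Then
\[
T^{[3]}\!\left(\tfrac23,\tfrac23\right)=f_3\!\left(\tfrac49\right)=\tfrac13,
\]
so both bracketings give
\[
T^{[3]}\!\left(\tfrac13,\tfrac23\right)=f_3\!\left(\tfrac29\right)=0.
\]
This triple is symmetric, so associativity holds here; we need an asymmetric triple. With $n=4$ we compute $T^{[4]}\big(T^{[4]}(\tfrac34,\tfrac34),\tfrac24\big)$ and $T^{[4]}\big(\tfrac34,T^{[4]}(\tfrac34,\tfrac24)\big)$:
\begin{itemize}
\item[] $f_4(\tfrac{9}{16})=\tfrac24$, and then $f_4(\tfrac14)=\tfrac14$;
\item[] $f_4(\tfrac38)=\tfrac14$, and then $f_4(\tfrac{3}{16})=0$.
\end{itemize}
The results $\tfrac14\neq 0$ differ, so $T_P^{[4]}$ is not associative. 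A short search over small grids like this one is the only genuine work in the proof.
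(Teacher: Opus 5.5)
Your proof is correct. The direct check of (T1), (T3) and (T4) is complete, with the branch-crossing case $z=1$ handled correctly via $f_n(x)\le x$; the paper itself does not write this out and simply cites the source. Your counterexample to (T2) is the $n=4$ instance of the paper's Example~\ref{exp 2.1}(iii), which takes $T=T_P$ with $f_n(x)=f_n(y)=\tfrac{n-1}{n}$ and $f_n(z)=\tfrac{2}{n}$, and obtains $\tfrac{1}{n}\neq 0$ for the two bracketings, so your approach is essentially the paper's.
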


From \cite[p.100]{EP2000}, $T^{[n]}$ is associative (and, subsequently, a t-norm) if and only if $(\Gamma_{(n)},T^{[n]}|_{\Gamma_{(n)}^{2}},\leq)$ is a commutative, fully ordered semigroup with neutral element $1$, i.e., if $T^{[n]}|_{\Gamma_{(n)}^{2}}$ is a discrete t-norm. For instance, if $T=T_{M}$, or $T=T_{L}$, or $T=T_{D}$, then $T^{[n]}$ must be associative, furthermore, $T^{[n]}$ is a t-norm for $n\in{\mathbb{N}}$.

In \cite[p.100]{EP2000}, the authors indicate that if a t-norm $T$ is strictly monotone then $T^{[n]}$ is never associative for $n\geq2$. However, this result may not necessarily be true. The subsequent example can explain this viewpoint clearly.
\begin{example}\label{exp 2.1}
\emph{From Lemma \ref{lem 2.1}, the binary operation $T^{[n]}:[0,1]^2\rightarrow{[0,1]}$ given by Eq.\eqref{eq 04} satisfies (T1), (T3) and (T4), but it is not necessarily associative. On the other hand, the following facts hold:}
\renewcommand{\labelenumi}{\emph{(\roman{enumi})}}
\begin{enumerate}
\item \emph{If $n=2$ and $T=T_{P}$, then by Eq.\eqref{eq 03}, for all $x,y,z\in{[0,1)}$ we have $f_{2}(x), f_{2}(y), f_{2}(z)\in{\{0,\frac{1}{2}\}}$ and $T^{[2]}(x,y)=f_{2}(T_{P}(f_{2}(x),f_{2}(y)))=f_{2}(f_{2}(x)\cdot{f_{2}(y)})\leq{f_{2}(\frac{1}{4})}=0$. Thus
$$T^{[2]}(T^{[2]}(x,y),z)=f_{2}(T_{P}(f_{2}(T^{[2]}(x,y)),f_{2}(z))=f_{2}(0)=0.$$ In complete analogy, we have $T^{[2]}(x,T^{[2]}(y,z))=0$. Hence $T^{[2]}(T^{[2]}(x,y),z)=T^{[2]}(x,T^{[2]}(y,z))$ for all $x,y,z\in{[0,1)}$. If at least one of $\{x,y,z\}$ equals to $1$, then obviously, we have $T^{[2]}(T^{[2]}(x,y),z)=T^{[2]}(x,T^{[2]}(y,z))$. Therefore, $T^{[2]}(T^{[2]}(x,y),z)=T^{[2]}(x,T^{[2]}(y,z))$ for all $x,y,z\in{[0,1]}$.}
\item \emph{If $n=3$ and $T=T_{P}$, then by Eq.\eqref{eq 03}, for arbitrary $x,y,z\in{[0,1)}$ we get $f_{3}(x), f_{3}(y), f_{3}(z)\in{\{0, \frac{1}{3}, \frac{2}{3}\}} $. Thus $$T^{[3]}(x,y)=f_{3}(T_{P}(f_{3}(x),f_{3}(y)))=f_{3}(f_{3}(x)\cdot{f_{3}(y)})\in{\{0,\frac{1}{3}\}},$$
which together with $T^{[3]}(x,y)\cdot{f_{3}(z)}\leq{\frac{2}{9}}<{\frac{1}{3}}$ implies
\begin{eqnarray*}
T^{[3]}(T^{[3]}(x,y),z)&=&f_{3}(T_{P}(f_{3}(T^{[3]}(x,y)),f_{3}(z)))\\
&=&f_{3}(T^{[3]}(x,y)\cdot{f_{3}(z)})\\
&=&0.
\end{eqnarray*}
Analogously, we can obtain $T^{[3]}(x,T^{[3]}(y,z))=0$, i.e., $T^{[3]}(T^{[3]}(x,y),z)=0=T^{[3]}(x,T^{[3]}(y,z))$ for all $x,y,z\in{[0,1)}$.
If at least one of $\{x,y,z\}$ equals to $1$, then obviously, $T^{[3]}(T^{[3]}(x,y),z)=T^{[3]}(x,T^{[3]}(y,z))$. Therefore, $T^{[3]}(T^{[3]}(x,y),z)=T^{[3]}(x,T^{[3]}(y,z))$ for all $x,y,z\in{[0,1]}$.}
\item \emph{If $n\geq{4}$ and $T=T_{P}$, then $T^{[n]}$ is never associative.}

 \emph{Indeed, by Eq.\eqref{eq 03} we have that $f_{n}(x),f_{n}(y),f_{n}(z)\in{\{0,\frac{1}{n},\frac{2}{n},\dots, \frac{n-1}{n}\}}$ for any $x,y,z\in{[0,1)}$.
Now we choose $x,y\in{[\frac{n-1}{n},1)}, z\in{[\frac{2}{n},\frac{3}{n})}$, then again by Eq.\eqref{eq 03}, $f_{n}(x)=f_{n}(y)=\frac{n-1}{n}$ and $f_{n}(z)=\frac{2}{n}$, which deduce $\frac{n-2}{n}<{\frac{(n-1)^{2}}{n^{2}}}<{\frac{n-1}{n}}$ since $n\geq{4}$. Thus
$$T^{[n]}(x,y)=f_{n}(T_{P}(f_{n}(x),f_{n}(y)))=f_{n}(T_{P}(\frac{n-1}{n},\frac{n-1}{n}))=f_{n}(\frac{(n-1)^{2}}{n^{2}})=\frac{n-2}{n}.$$
Note that $\frac{1}{n}\leq{\frac{2(n-2)}{n^{2}}}<{\frac{2n-2}{n^{2}}}<{\frac{2}{n}}$ when $n\geq{4}$. Then we immediately have
\begin{equation}\label{001eq}
T^{[n]}(T^{[n]}(x,y),z)=f_{n}(T_{P}(\frac{n-2}{n},\frac{2}{n}))=f_{n}(\frac{2(n-2)}{n^{2}})=\frac{1}{n}.
\end{equation}
Similarly, we can obtain
$$T^{[n]}(y,z)=f_{n}(T_{P}(f_{n}(y),f_{n}(z)))=f_{n}(T_{P}(\frac{n-1}{n},\frac{2}{n}))=f_{n}(\frac{2n-2}{n^{2}})=\frac{1}{n}$$
and
\begin{equation}\label{002eq}
T^{[n]}(x,T^{[n]}(y,z))=T^{[n]}(x,\frac{1}{n})=f_{n}(T_{P}(\frac{n-1}{n},\frac{1}{n}))=0.\end{equation}
Eqs. \eqref{001eq} and \eqref{002eq} mean that $T^{[n]}(T^{[n]}(x,y),z)\neq{T^{[n]}(x,T^{[n]}(y,z))}.$ Therefore, $T^{[n]}$ is never associative, i.e., $T^{[n]}$ is not a t-norm  when $T=T_{P}$ and $n\geq{4}$.}
\end{enumerate}
\end{example}

\begin{remark}\label{rem 2.3}
\emph{According to Example \ref{exp 2.1}, two questions naturally arise:}
\renewcommand{\labelenumi}{\emph{(\roman{enumi})}}
\begin{enumerate}
\item \emph{What conditions guarantee that $T^{[n]}$ given by Eq.\eqref{eq 04} is a t-norm?}
\item \emph{What conditions are required for $T^{[n]}$ given by Eq.\eqref{eq 04} is never associative whenever $T$ is strictly monotone?}
\end{enumerate}
\end{remark}

\section{Main results}

In this section, we present some sufficient conditions for $T^{[n]}$ to be t-norms, and for $T^{[n]}$ to be never associative when $T$ is strictly monotone.
\begin{theorem}\label{the 3.1}
Let $T:[0,1]^2\rightarrow[0,1]$ be a t-norm. If $T(x,y)<T_M(x,y)$ for all $x,y\in(0,1)$ and $T(\frac{i}{n},\frac{j}{n})\in{[0,\frac{2}{n})}$ for all $i,j\in{\{0,1,2,\dots, n-1\}}$, then $T^{[n]}$ given by Eq.\eqref{eq 04} is a t-norm.
\end{theorem}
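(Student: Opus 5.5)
The plan is to show that, under the two hypotheses, every triple product of arguments from $[0,1)$ collapses to $0$, exactly as in Example \ref{exp 2.1}(i),(ii). Since Lemma \ref{lem 2.1} already gives (T1), (T3) and (T4), only associativity (T2) has to be checked.

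\textbf{Step 1 (range of $T^{[n]}$ on $[0,1)^2$).} Let $x,y\in[0,1)$. Then $f_n(x)=\frac{i}{n}$ and $f_n(y)=\frac{j}{n}$ with $i,j\in\{0,1,\dots,n-1\}$. By the second hypothesis $T(\frac{i}{n},\frac{j}{n})\in[0,\frac{2}{n})$. Hence, by Eq.\eqref{eq 03},
$$T^{[n]}(x,y)=f_n\bigl(T(\tfrac{i}{n},\tfrac{j}{n})\bigr)\in\{0,\tfrac{1}{n}\}.$$
In particular $T^{[n]}(x,y)\in[0,1)$ (at least for $n\geq 2$; for $n=1$ everything in $[0,1)$ maps to $0$ and the argument below is trivial). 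So the first branch of Eq.\eqref{eq 04} applies again when this value is fed back in.

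\textbf{Step 2 (triple products vanish).} Let $x,y,z\in[0,1)$ and put $a=T^{[n]}(x,y)\in\{0,\frac1n\}$, so that $f_n(a)=a$. We claim $T(a,f_n(z))<\frac1n$, which gives $T^{[n]}(a,z)=0$.
\begin{itemize}
\item If $a=0$ or $f_n(z)=0$, then $T(a,f_n(z))=0$ by boundary behaviour of t-norms.
\item Otherwise $a=\frac1n$ and $f_n(z)\in(0,\frac{n-1}{n}]\subset(0,1)$. Then the first hypothesis gives $T(a,f_n(z))<T_M(\frac1n,f_n(z))=\frac1n$, since $f_n(z)\geq \frac1n$.
\end{itemize}
Thus $T^{[n]}(T^{[n]}(x,y),z)=0$. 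By the symmetric argument, using commutativity, $T^{[n]}(x,T^{[n]}(y,z))=0$. So associativity holds on $[0,1)^3$.

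\textbf{Step 3 (arguments equal to $1$).} Suppose one of $x,y,z$ equals $1$. The second branch of Eq.\eqref{eq 04} shows $T^{[n]}(1,t)=T^{[n]}(t,1)=t$ for all $t$, so $1$ is a neutral element. Consequently:
\begin{itemize}
\item if $x=1$, both sides of (T2) reduce to $T^{[n]}(y,z)$;
\item if $y=1$, both sides reduce to $T^{[n]}(x,z)$;
\item if $z=1$, both sides reduce to $T^{[n]}(x,y)$.
\end{itemize}
Combining Steps 2 and 3 with Lemma \ref{lem 2.1} shows that $T^{[n]}$ is a t-norm.

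The only delicate point is Step 2. The strict inequality $T<T_M$ is needed precisely at the pair $(\frac1n,f_n(z))$, so one must check that this pair lies in $(0,1)^2$. That is guaranteed because $z<1$ forces $f_n(z)\le\frac{n-1}{n}$.
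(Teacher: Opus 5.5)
Your proof is correct and follows essentially the same route as the paper: both show that $T^{[n]}$ maps $[0,1)^2$ into $\{0,\frac{1}{n}\}$, then use $T(\frac{1}{n},\frac{k}{n})<T_M(\frac{1}{n},\frac{k}{n})\le\frac{1}{n}$ to force both bracketings to equal $0$, and handle the case $1\in\{x,y,z\}$ through neutrality of $1$. The differences are small: you avoid the paper's three-case split on where $T(\frac{i}{n},\frac{j}{n})$ and $T(\frac{j}{n},\frac{k}{n})$ fall, and you treat a zero factor and the inputs equal to $1$ explicitly, whereas the paper calls the latter trivial.
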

\begin{proof}
As a consequence of Lemma \ref{lem 2.1}, $T^{[n]}$ satisfies (T1), (T3) and (T4), so it is enough to prove that $T^{[n]}$ satisfies (T2).

We first suppose that $x,y,z\in{[0,1)}$. Then from Eq.\eqref{eq 03}, we can let $f_{n}(x)=\frac{i}{n}, f_{n}(y)=\frac{j}{n}$ and $f_{n}(z)=\frac{k}{n}$ with $i,j,k\in{\{0,1,2,\dots, n-1\}}$. There are three cases as follows.

Case 1. If $T(\frac{i}{n},\frac{j}{n}), T(\frac{j}{n},\frac{k}{n})\in{[0,\frac{1}{n})}$, then from Eq.\eqref{eq 03} it is easy to see
$$T^{[n]}(x,y)=f_{n}(T(f_{n}(x),f_{n}(y)))=f_{n}(T(\frac{i}{n},\frac{j}{n}))=0.$$
Similarly, we have $ T^{[n]}(y,z)=0.$  Thus
$$T^{[n]}(T^{[n]}(x,y),z)=f_{n}(T(0,f_{n}(z)))=f_{n}(T(0,\frac{k}{n}))=f_{n}(0)=0$$
and
$$T^{[n]}(x,T^{[n]}(y,z))=f_{n}(T(f_{n}(x),0))=f_{n}(T(\frac{i}{n},0))=f_{n}(0)=0.$$
Consequently, $T^{[n]}(T^{[n]}(x,y),z)=T^{[n]}(x,T^{[n]}(y,z))$.

Case 2. If $T(\frac{i}{n},\frac{j}{n}), T(\frac{j}{n},\frac{k}{n})\in{[\frac{1}{n},\frac{2}{n})}$, then
$$T^{[n]}(x,y)=f_{n}(T(f_{n}(x),f_{n}(y)))=f_{n}(T(\frac{i}{n},\frac{j}{n}))=\frac{1}{n},$$
and analogously, we have $T^{[n]}(y,z)=\frac{1}{n}$. Since $T(x,y)<T_M(x,y)$ for all $x,y\in(0,1)$, we have that $$T(\frac{1}{n},\frac{k}{n})<T_{M}(\frac{1}{n},\frac{k}{n})\leq\frac{1}{n}.$$ Again from Eq.\eqref{eq 03},
$$T^{[n]}(T^{[n]}(x,y),z)=f_{n}(T(f_{n}(T^{[n]}(x,y)),f_{n}(z)))=f_{n}(T(f_{n}(\frac{1}{n}),\frac{k}{n}))=f_{n}(T(\frac{1}{n},\frac{k}{n}))=0.$$
In complete analogy, we get $T^{[n]}(x,T^{[n]}(y,z))=0.$ Therefore, $T^{[n]}(T^{[n]}(x,y),z)=T^{[n]}(x,T^{[n]}(y,z))$.

Case 3. If either $T(\frac{i}{n},\frac{j}{n})\in{[0,\frac{1}{n})}$ and $T(\frac{j}{n},\frac{k}{n})\in{[\frac{1}{n},\frac{2}{n})}$ or $T(\frac{j}{n},\frac{k}{n})\in{[0,\frac{1}{n})}$ and $T(\frac{i}{n},\frac{j}{n})\in{[\frac{1}{n},\frac{2}{n})}$, say $T(\frac{i}{n},\frac{j}{n})\in{[0,\frac{1}{n})}$ and $T(\frac{j}{n},\frac{k}{n})\in{[\frac{1}{n},\frac{2}{n})}$, then as immediate consequences of Cases 1 and 2 we get $T^{[n]}(x,y)=0$ and $T^{[n]}(y,z)=\frac{1}{n}.$ Consequently,
$$T^{[n]}(T^{[n]}(x,y),z)=T^{[n]}(0,z)=f_{n}(T(f_{n}(0),f_{n}(z)))=f_{n}(T(0,\frac{k}{n}))=f_{n}(0)=0.$$
Analogously, we have
$$T^{[n]}(x,T^{[n]}(y,z))=f_{n}(T(f_{n}(x),f_{n}(\frac{1}{n})))=f_{n}(T(\frac{i}{n},\frac{1}{n}))=0.$$
Therefore, $T^{[n]}(T^{[n]}(x,y),z)=T^{[n]}(x,T^{[n]}(y,z)).$

Cases 1, 2 and 3 mean that $T^{[n]}(T^{[n]}(x,y),z)=T^{[n]}(x,T^{[n]}(y,z))$ for any $x,y,z\in{[0,1)}$.

Now, if at least one of $x,y,z$ equals $1$, then $T^{[n]}(T^{[n]}(x,y),z)=T^{[n]}(x,T^{[n]}(y,z))$ holds trivially.

Therefore, $T^{[n]}$ is a t-norm.
\end{proof}

\begin{corollary}\label{cor 3.1}
(i) If $T:[0,1]^{2}\rightarrow{[0,1]}$ is an Archimedean t-norm and $T(\frac{i}{n},\frac{j}{n})\in{[0,\frac{2}{n})}$ for all $i,j\in{\{0,1,2,\dots, n-1\}}$, then $T^{[n]}$ given by Eq.\eqref{eq 04} is a t-norm.

(ii) If a t-norm $T:[0,1]^2\rightarrow[0,1]$ is strictly monotone and $T(\frac{i}{n},\frac{j}{n})\in{[0,\frac{2}{n})}$ for all $i,j\in{\{0,1,2,\dots, n-1\}}$, then $T^{[n]}$ given by Eq.\eqref{eq 04} is a t-norm.

(iii) If $T:[0,1]^2\rightarrow[0,1]$ is a t-norm with an additive generator $t$ satisfying $t(\frac{i}{n})+t(\frac{j}{n})\in{\emph{Ran}(t\mid_{[0,\frac{2}{n})}})$ for all $i,j\in{\{0,1,2,\dots, n-1\}}$, then $T^{[n]}$ given by Eq.\eqref{eq 04} is a t-norm.
\end{corollary}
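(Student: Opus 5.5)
Each of the three parts reduces to Theorem \ref{the 3.1}. The condition $T(\frac{i}{n},\frac{j}{n})\in[0,\frac{2}{n})$ is either assumed outright or derived, so in every case it remains only to check that $T(x,y)<T_M(x,y)$ for all $x,y\in(0,1)$.

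For (i) we use the standard fact from \cite{EP2000} that a t-norm $T$ is Archimedean if and only if $T(x,x)<x$ for all $x\in(0,1)$. Take $x,y\in(0,1)$ and assume without loss of generality that $x\le y$, so $T_M(x,y)=x$.
\begin{itemize}
\item If $x=y$, then $T(x,x)<x$ by the Archimedean property.
\item If $x<y$, suppose to the contrary that $T(x,y)=x$. By monotonicity $T(x,z)=x$ for all $z\in[y,1]$. Iterating with associativity gives $x=T(x,y)=T(T(x,y),y)=T(x,y_T^{(2)})=\cdots=T(x,y_T^{(m)})$ for every $m$.
\item Since $T$ is Archimedean, $y_T^{(m)}<x$ for some $m$. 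Then $T(x,y_T^{(m)})\le y_T^{(m)}<x$, a contradiction.
\end{itemize}
Hence $T(x,y)<T_M(x,y)$ on $(0,1)^2$, and Theorem \ref{the 3.1} applies. This small lemma is the only step needing care, and the power argument above settles it.

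For (ii), a strictly monotone t-norm satisfies $T(x,y)<T(x,1)=x$ whenever $x\in(0,1)$ and $y<1$. So $T(x,y)<\min\{x,y\}=T_M(x,y)$ for $x,y\in(0,1)$, and Theorem \ref{the 3.1} gives the claim directly. Alternatively, one can note that strictly monotone implies Archimedean via $T(x,x)<x$ and invoke (i).

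For (iii), recall that a t-norm with an additive generator $t$, where $t$ is strictly decreasing with $t(1)=0$, is given by $T(x,y)=t^{(-1)}(t(x)+t(y))$. Such a $T$ is Archimedean, since $t(x)+t(x)>t(x)$ yields $T(x,x)<x$ for $x\in(0,1)$. It remains to derive the hypothesis on $T(\frac{i}{n},\frac{j}{n})$.
\begin{itemize}
\item By assumption, $t(\frac{i}{n})+t(\frac{j}{n})=t(u)$ for some $u\in[0,\frac{2}{n})$.
\item In particular this sum lies in $\mathrm{Ran}(t)$, so the pseudo-inverse acts as a true inverse there.
\item Hence $T(\frac{i}{n},\frac{j}{n})=t^{(-1)}(t(u))=u\in[0,\frac{2}{n})$.
\end{itemize}
Part (i) now yields that $T^{[n]}$ is a t-norm.

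The only point needing attention is the pseudo-inverse in (iii). If $t(u)$ lies in the range of $t$, then $t^{(-1)}(t(u))=u$, because $t$ is strictly decreasing and hence injective.
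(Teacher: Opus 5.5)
Your overall route is the paper's: reduce all three parts to Theorem 3.1, prove $T<T_M$ on $(0,1)^2$ for Archimedean $T$ by the power argument, handle (ii) directly, and in (iii) compute $T(\frac{i}{n},\frac{j}{n})=u$ and fall back on (i). Part (i) and your direct argument for (ii) are correct. \textbf{The flaw is the ``standard fact'' you invoke.} For an arbitrary t-norm only the implication Archimedean $\Rightarrow$ $T(x,x)<x$ on $(0,1)$ holds. The converse needs an extra hypothesis such as right-continuity. For example, put $T(x,y)=\min\{x,y\}$ if $\max\{x,y\}=1$, $T(x,y)=\frac12+2(x-\frac12)(y-\frac12)$ if $x,y\in(\frac12,1)$, and $T(x,y)=0$ otherwise. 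This is a t-norm with $T(x,x)<x$ for all $x\in(0,1)$, yet $x_T^{(m)}>\frac12$ for every $m$ when $x\in(\frac12,1)$.

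Strict monotonicity does not rescue the converse either. Write each $x\in(0,1]$ uniquely as $x=2^{-k-1}(1+u)$ with $k\in\{0,1,2,\dots\}$ and $u\in(0,1]$. Let $T(x,y)$ be the point with coordinates $(k+l,uv)$ when $y$ has coordinates $(l,v)$, and set $T(0,y)=0$. This gives a strictly monotone t-norm in which the powers of any $x\in(\frac12,1)$ stay above $\frac12$. Consequently:
\begin{itemize}
\item your ``alternative'' in (ii) is false, though this is harmless since the direct argument stands;
\item in (iii), the step ``$t(x)+t(x)>t(x)$ gives $T(x,x)<x$, so $T$ is Archimedean'' does not establish the hypothesis of (i) that you then use.
\end{itemize}

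The conclusion of (iii) is nevertheless true, and the gap is easy to close. By the range condition in the definition of an additive generator, induction gives $t(x_T^{(m)})=\min\{m\,t(x),t(0)\}$. For $x,y\in(0,1)$ we have $t(x)>0$ and $t(y)<t(0)$, so $x_T^{(m)}<y$ as soon as $m\,t(x)>t(y)$. Hence $T$ is Archimedean; this is what the paper obtains by citing Proposition 3.29 of the monograph.

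Alternatively, bypass (i) altogether. For $x,y\in(0,1)$, $t(x)+t(y)>t(\min\{x,y\})$ because $t(\max\{x,y\})>0$. The same range condition then yields $T(x,y)<\min\{x,y\}$. So Theorem 3.1 applies directly, together with your computation $T(\frac{i}{n},\frac{j}{n})=u\in[0,\frac{2}{n})$.
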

\begin{proof}
(i) It is clear to see $T(x,y)\leq\min\{x,y\}$ for all $x,y\in{(0,1)}$. Based on Theorem 2.12 (iii) of \cite{EP2000}, without loss of generality, let $x<y$. We claim that $T(x,y)<T_{M}(x,y)$.
Assume that $T(x,y)=T_{M}(x,y)$. According to (T2), it is immediately to get $$x=T(x,y)=T(T(x,y),y)=T(x,T(y,y))=T(x,y_{T}^{(2)}).$$
By induction, we have $x=T(x,y_{T}^{(n)})$ for each $n\in\mathbb{N}$. Since $T$ is Archimedean, there exists an $n_{0}\in\mathbb{N}$ such that $y_{T}^{(n_{0})}<x$, which together with Theorem 2.12 (iii) of \cite{EP2000} leads to a contradiction $$x=T(x,y_{T}^{(n_{0})})\leq{T(x,x)}<x.$$
Therefore, by Theorem \ref{the 3.1}, $T^{[n]}$ given by Eq.\eqref{eq 04} is a t-norm.

(ii) Since $T$ is strictly monotone, $T(x,y)<T_M(x,y)$ for all $x,y\in(0,1)$. By Theorem \ref{the 3.1}, $T^{[n]}$ is a t-norm.

(iii) By Proposition 3.29 of \cite{EP2000}, $T$ is Archimedean. Since $t(\frac{i}{n})+t(\frac{j}{n})\in{\mbox{Ran}(t\mid_{[0,\frac{2}{n})})}$ for all $i,j\in{\{0,1,2,\dots, n-1\}}$, there exists a $w\in{[0,\frac{2}{n})}$ such that $t(\frac{i}{n})+t(\frac{j}{n})=t(w)$.  From (3.16) of \cite{EP2000}, a simple computation leads to $$T(\frac{i}{n},\frac{j}{n})=t^{(-1)}(t(\frac{i}{n})+t(\frac{j}{n}))=w\in{[0,\frac{2}{n})} \ \mbox{for} \ \mbox{all} \ i,j\in{\{0,1,2,\dots, n-1\}}.$$
Therefore, it follows from (i) that $T^{[n]}$ is a t-norm.
\end{proof}

\begin{corollary}\label{cor 3.2}
Let $T:[0,1]^{2}\rightarrow{[0,1]}$ be a t-norm. If one of the following condition holds,\renewcommand{\labelenumi}{(\roman{enumi})}
\begin{enumerate}
\item $T(x,y)<T_M(x,y)$ for all $x,y\in(0,1)$,
\item $T$ is Archimedean,
\item $T$ is strictly monotone,
\end{enumerate}then both $T^{[2]}$ and $T^{[3]}$ are t-norms.
\end{corollary}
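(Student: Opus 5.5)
The plan is to reduce everything to Theorem \ref{the 3.1}. By Corollary \ref{cor 3.1}(i) and (ii), conditions (ii) and (iii) of the statement each imply condition (i), namely $T(x,y)<T_M(x,y)$ for all $x,y\in(0,1)$. Indeed, the proof of Corollary \ref{cor 3.1}(i) shows that an Archimedean t-norm satisfies this strict inequality, and strict monotonicity gives it directly. So it suffices to assume (i) and verify the second hypothesis of Theorem \ref{the 3.1}, namely $T(\frac{i}{n},\frac{j}{n})\in[0,\frac{2}{n})$ for all $i,j\in\{0,1,\dots,n-1\}$, in the cases $n=2$ and $n=3$.

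For $n=2$, the relevant values are $i,j\in\{0,1\}$. If $i=0$ or $j=0$, then $T(\frac{i}{2},\frac{j}{2})=0$ by monotonicity and the boundary condition. Otherwise $T(\frac12,\frac12)<T_M(\frac12,\frac12)=\frac12<1=\frac22$ by (i). Actually any value at most $\frac12$ is already in $[0,1)$, since $T\le T_M$ always holds.

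For $n=3$, the relevant values are $i,j\in\{0,1,2\}$. Again, if one of the indices is $0$, then $T=0$. For $i,j\in\{1,2\}$ we have $T(\frac{i}{3},\frac{j}{3})\le T(\frac23,\frac23)<T_M(\frac23,\frac23)=\frac23$ by monotonicity and (i). Hence every such value lies in $[0,\frac23)$, as required.

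Theorem \ref{the 3.1} then yields that $T^{[2]}$ and $T^{[3]}$ are t-norms. The only delicate point is the case $n=3$ with $i=j=2$. There the weak bound $T\le T_M$ gives only $\le\frac23$, which would not place the value in $[0,\frac23)$; the strict inequality in (i) is exactly what closes the gap. For $n=2$ the strict inequality is not even needed.
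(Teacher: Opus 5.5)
Your proposal is correct and takes essentially the same route as the paper. Both arguments use Corollary \ref{cor 3.1} to reduce conditions (ii) and (iii) to $T<T_M$ on $(0,1)^2$, check that $T(\frac{i}{n},\frac{j}{n})\in[0,\frac{2}{n})$ for $n=2,3$, and then invoke Theorem \ref{the 3.1}. Your write-up is more explicit than the paper's, which simply asserts the range condition, because you isolate $T(\frac23,\frac23)<\frac23$ as the one place where the strict inequality is needed for that hypothesis.
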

\begin{proof}
Whenever $T$ satisfies any one of the three conditions (i), (ii) and (iii), we can obtain $T(\frac{i}{2},\frac{j}{2})\in{[0,\frac{1}{2})}, T(\frac{j}{2},\frac{k}{2})\in{[0,\frac{1}{2})}$ for all $i,j,k\in{\{0,1\}}$ and $T(\frac{i}{3},\frac{j}{3})\in{[0,\frac{2}{3})}, T(\frac{j}{3},\frac{k}{3})\in{[0,\frac{2}{3})}$ for all $i,j,k\in{\{0,1,2\}}$.

Therefore, by Theorem \ref{the 3.1} and Corollary \ref{cor 3.1}, both $T^{[2]}$ and $T^{[3]}$ are t-norms.
\end{proof}
\begin{remark}\label{rem 3.1} \emph{Let $T:[0,1]^2\rightarrow[0,1]$ be a t-norm.
\renewcommand{\labelenumi}{(\roman{enumi})}
\begin{enumerate}
\item  Example \ref{exp 2.1} shows that for $n>3$, $T^{[n]}$ given by Eq.\eqref{eq 04} is not necessarily associative even if $T$ satisfies one of the conditions (i), (ii) and (iii) of Corollary \ref{cor 3.2}.
\item  From Remark 7.27 (i) of \cite{EP2000}, we know that $T^{[n]}$ given by Eq.\eqref{eq 04} is a t-norm when $T|_{\Gamma_{(n)}^{2}}$ is a discrete t-norm. However, from Corollary \ref{cor 3.1}, $T^{[n]}$ is a t-norm even if $T|_{\Gamma_{(n)}^{2}}$ is not necessarily a discrete t-norm. For example, define $T:[0,1]^2\rightarrow[0,1]$ by $$T(x,y)=\left\{\begin{array}{ll}  \frac{xy}{n},  &\text{}(x,y)\in{[0,1)^{2}},\\
\min\{x,y\},  &\text{otherwise.}\end{array}\right.$$
Obviously, $T$ is a strictly monotone, non-continuous and Archimedean t-norm. From Eq.\eqref{eq 03}, for any $x,y,z\in{[0,1)}$, let $f_{n}(x)=\frac{i}{n}, f_{n}(y)=\frac{j}{n}$ and $f_{n}(z)=\frac{k}{n}$ with $i,j,k\in{\{0,1,2,\dots, n-1\}}$. After a simple computation, we get $$T(\frac{i}{n},\frac{j}{n})=\frac{\frac{i}{n}\cdot{\frac{j}{n}}}{n}<\frac{1}{n}\in[0,\frac{2}{n}) \ \mbox{for}\ \mbox{all}\ i, j\in{\{0,1,2,\dots, n-1\}}.$$ According to Corollary \ref{cor 3.1}, $T^{[n]}$ is a t-norm. Moreover, one can check that $\Gamma_{(n)}$ is not closed under $T$, i.e., $T|_{\Gamma_{(n)}^{2}}$ is not a discrete t-norm.
\end{enumerate}}
\end{remark}

Next, we give a sufficient condition for $T^{[n]}$ to be a t-norm whenever $T:[0,1]^2\rightarrow[0,1]$ is a continuous t-norm.
\begin{theorem}\label{the 3.2}
 Let $T:[0,1]^2\rightarrow[0,1]$ be uniquely representable as an ordinal sum of continuous Archimedean t-norms $(T_\alpha)_{\alpha\in A}$ with $A\neq\emptyset$. If for all $(\frac{i}{n},\frac{j}{n})\in[a_{\alpha},e_{\alpha}]^2$ and $\alpha\in A$, there exists $\frac{p}{n}\in[a_{\alpha},e_{\alpha}]$ such that $T_{\alpha}(\varphi_{\alpha}(\frac{i}{n}),\varphi_{\alpha}(\frac{j}{n}))=\varphi_{\alpha}(\frac{p}{n})$ where $\varphi_{\alpha}(x)=\frac{x-a_{\alpha}}{e_{\alpha}-a_{\alpha}}$ and $i, j, p\in{\{0,1,2,\dots, n-1\}}$, then $T^{[n]}$ given by Eq.\eqref{eq 04} is a t-norm.
\end{theorem}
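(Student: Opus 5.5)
The plan is to show that $\Gamma_{(n)}$ is closed under $T$. Then $T|_{\Gamma_{(n)}^2}$ is a discrete t-norm, and Remark 7.27 (i) of \cite{EP2000} (recalled in Remark \ref{rem 3.1} (ii)), or the equivalent statement from \cite[p.100]{EP2000}, gives that $T^{[n]}$ is a t-norm. Concretely, on $\Gamma_{(n)}$ we have $f_n(u)=u$. So if $T(\frac in,\frac jn)\in\Gamma_{(n)}$ for all $i,j$, then for $x,y\in[0,1)$ we get $T^{[n]}(x,y)=T(f_n(x),f_n(y))$. Associativity of $T^{[n]}$ on $[0,1)^3$ then reduces to associativity of $T$ applied to $f_n(x),f_n(y),f_n(z)$, using again that $f_n$ fixes $T^{[n]}(x,y)\in\Gamma_{(n)}$. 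The case where one argument equals $1$ is trivial, as in Theorem \ref{the 3.1}. Hence the whole proof rests on the closure claim.

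To prove closure, use the ordinal sum representation (Theorem 3.43 of \cite{EP2000}): for $(u,v)\in[a_\alpha,e_\alpha]^2$,
\[
T(u,v)=a_\alpha+(e_\alpha-a_\alpha)\,T_\alpha(\varphi_\alpha(u),\varphi_\alpha(v)),
\]
and $T(u,v)=\min\{u,v\}$ otherwise. Take $i,j\in\{0,\dots,n-1\}$. If $(\frac in,\frac jn)$ lies in no square $[a_\alpha,e_\alpha]^2$, then $T(\frac in,\frac jn)=\min\{\frac in,\frac jn\}\in\Gamma_{(n)}$. Otherwise there is some $\alpha$ with $(\frac in,\frac jn)\in[a_\alpha,e_\alpha]^2$. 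The hypothesis gives $\frac pn\in[a_\alpha,e_\alpha]$ with $T_\alpha(\varphi_\alpha(\frac in),\varphi_\alpha(\frac jn))=\varphi_\alpha(\frac pn)$. Since $\varphi_\alpha$ is affine, this yields
\[
T(\tfrac in,\tfrac jn)=a_\alpha+(e_\alpha-a_\alpha)\varphi_\alpha(\tfrac pn)=\tfrac pn\in\Gamma_{(n)}.
\]
Pairs with $i=n$ or $j=n$ are handled by the neutral element. Note that the summand intervals may share endpoints. At such points, both the formula from adjacent summands and $\min$ give the same value, namely the common endpoint. So the value is unambiguous, and uniqueness of the representation is only needed to fix the data $a_\alpha,e_\alpha,T_\alpha$.

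The main obstacle is bookkeeping rather than depth. First, we must check carefully that $T^{[n]}(x,y)=T(f_n(x),f_n(y))$ holds on $[0,1)^2$. Second, the nested term $T^{[n]}(T^{[n]}(x,y),z)$ needs care when $T^{[n]}(x,y)$ could equal $1$. This cannot happen, since $T(u,v)\le\min\{u,v\}<1$ for $u,v<1$. With that checked, the triple $T^{[n]}(T^{[n]}(x,y),z)$ equals $T(T(f_n x,f_n y),f_n z)$, and (T2) for $T$ finishes the proof.
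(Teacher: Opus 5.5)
Your proof is correct, but it is organized differently from the paper's. The paper fixes $f_n(x)=\frac in$, $f_n(y)=\frac jn$, $f_n(z)=\frac kn$ and splits into cases according to how these three grid points are distributed among the summand intervals. When they lie in different summands, it computes both bracketings by hand through $T_M$. Only when all three lie in one $[a_\alpha,e_\alpha]$ does it use the hypothesis to get $T(\frac in,\frac jn)=\frac pn$ and then invoke (T2) for $T$.

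You make that last observation global. The hypothesis (inside the squares $[a_\alpha,e_\alpha]^2$), $T=\min$ outside them, and the neutral element together show that $\Gamma_{(n)}$ is closed under $T$. Hence $T^{[n]}=T(f_n(\cdot),f_n(\cdot))$ on $[0,1)^2$, with values in $\Gamma_{(n)}\cap[0,1)$, and associativity is inherited from $T$ in one line.

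What your route buys:
\begin{itemize}
\item Closure is checked on pairs rather than triples, so you avoid the configuration bookkeeping entirely. This includes configurations the paper's case split leaves implicit, such as $\frac in,\frac kn$ in one summand with $\frac jn$ in another, or grid points lying in no summand interval.
\item It shows that Theorem \ref{the 3.2} is an instance of the fact recalled in Remark \ref{rem 3.1} (ii): once $T|_{\Gamma_{(n)}^2}$ is a discrete t-norm, $T^{[n]}$ is a t-norm. Continuity, Archimedeanity of the summands and uniqueness of the representation play no role beyond delivering closure.
\end{itemize}

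What the paper's route buys: slightly more explicit information, for instance that $T^{[n]}$ acts as $T_M$ between grid points in distinct summands. The cost is a longer and less exhaustive case analysis.
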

\begin{proof}
 By Lemma \ref{lem 2.1}, only the associativity of $T^{[n]}$ is not trivial. It is easy to check that $T^{[n]}(T^{[n]}(x,y),z)=T^{[n]}(x,T^{[n]}(y,z))$ holds whenever $1\in{\{x,y,z\}}$.

 Now, suppose that $x,y,z\in{[0,1)}$. Then from Eq.\eqref{eq 03}, we can let $f_{n}(x)=\frac{i}{n}, f_{n}(y)=\frac{j}{n}, f_{n}(z)=\frac{k}{n}$ with $i, j, k\in{\{0,1,2,\dots, n-1\}}$. The rest of the proof can be divided into three cases as follows.

Case 1: If $\frac{i}{n}, \frac{j}{n}$ and $\frac{k}{n}$ belong to three distinct intervals, then without loss of generality, let $\frac{i}{n}\in{[a_{\alpha},e_{\alpha}]}$, $\frac{j}{n}\in{[a_{\beta},e_{\beta}]}$, $\frac{k}{n}\in{[a_{\gamma},e_{\gamma}]}$. Thus we have
$$T^{[n]}(x,y)=f_{n}(T(f_{n}(x),f_{n}(y)))=f_{n}(T(\frac{i}{n},\frac{j}{n}))=f_{n}(T_{M}(\frac{i}{n},\frac{j}{n}))=T_{M}(\frac{i}{n},\frac{j}{n}).$$
Analogously, we have $T^{[n]}(y,z)=f_{n}(T_{M}(\frac{j}{n},\frac{k}{n}))=T_{M}(\frac{j}{n},\frac{k}{n}).$
Consequently,
 \begin{eqnarray*}
T^{[n]}(x,T^{[n]}(y,z))&=&f_{n}(T(f_{n}(x),f_{n}(T^{[n]}(y,z))))\\
&=&f_{n}(T(\frac{i}{n},T^{[n]}(y,z)))\\
&=&f_{n}(T_{M}(\frac{i}{n},T_{M}(\frac{j}{n},\frac{k}{n})))\\
&=&f_{n}(T_{M}(T_{M}(\frac{i}{n},\frac{j}{n}),\frac{k}{n}))\mbox{ by (T2)}\\
&=&f_{n}(T(T^{[n]}(x,y),f_{n}(z)))\\
&=&T^{[n]}(T^{[n]}(x,y),z).
\end{eqnarray*}

Case 2: If $\frac{i}{n}, \frac{j}{n}$ and $\frac{k}{n}$ belong to two distinct intervals, say
$\frac{i}{n}, \frac{j}{n}\in[a_{\alpha},e_{\alpha}]$ and $\frac{k}{n}\in{[a_{\beta},e_{\beta}]}$, then by assumption, there exists $\frac{p}{n}\in{[a_{\alpha},e_{\alpha}]}$ such that
$T_{\alpha}(\varphi_{\alpha}(\frac{i}{n}),\varphi_{\alpha}(\frac{j}{n}))=\varphi_{\alpha}(\frac{p}{n})$. Subsequently, by the definition of $\varphi_{\alpha}$ we can obtain
 \begin{eqnarray*}
T^{[n]}(x,y)&=&f_{n}(T(f_{n}(x),f_{n}(y)))=f_{n}(T(\frac{i}{n},\frac{j}{n}))\\
&=&f_{n}(a_{\alpha}+(e_{\alpha}-a_{\alpha})T_{\alpha}(\varphi_{\alpha}(\frac{i}{n}),\varphi_{\alpha}(\frac{j}{n}))\\
&=&f_{n}(a_{\alpha}+(e_{\alpha}-a_{\alpha})\varphi_{\alpha}(\frac{p}{n}))\\
&=&f_{n}(\varphi_{\alpha}^{-1}(\varphi_{\alpha}(\frac{p}{n})))\\
&=&f_{n}(\frac{p}{n})\\
&=&\frac{p}{n}
\end{eqnarray*}
and $T^{[n]}(y,z)=f_{n}(T(f_{n}(y),f_{n}(z)))=f_{n}(T_{M}(\frac{j}{n},\frac{k}{n}))=T_{M}(\frac{j}{n},\frac{k}{n})$. Thus \begin{equation}\label{eq07}T^{[n]}(T^{[n]}(x,y),z)=f_{n}(T(f_{n}(\frac{p}{n}),f_{n}(z)))=f_{n}(T(\frac{p}{n},\frac{k}{n}))=f_{n}(T_{M}(\frac{p}{n},\frac{k}{n}))\end{equation}
and \begin{equation}\label{eq08}T^{[n]}(x,T^{[n]}(y,z))=f_{n}(T(\frac{i}{n},f_{n}(T_{M}(\frac{j}{n},\frac{k}{n})))).\end{equation}
On the one hand, if $e_{\alpha}\leq{a_{\beta}}$, then we have $T_{M}(\frac{p}{n},\frac{k}{n})=\frac{p}{n}$ and $T_{M}(\frac{j}{n},\frac{k}{n})=\frac{j}{n}$, which together with Eqs.\eqref{eq07} and \eqref{eq08} imply that
$$T^{[n]}(T^{[n]}(x,y),z)=f_{n}(\frac{p}{n})=\frac{p}{n}$$ and $$T^{[n]}(x,T^{[n]}(y,z))=f_{n}(T(\frac{i}{n},\frac{j}{n}))=f_{n}(\frac{p}{n})=\frac{p}{n}.$$
On the other hand, if $e_{\beta}\leq{a_{\alpha}}$, then we can similarly obtain $T^{[n]}(T^{[n]}(x,y),z)=\frac{k}{n}=T^{[n]}(x,T^{[n]}(y,z))$.

Therefore, $T^{[n]}(T^{[n]}(x,y),z)=T^{[n]}(x,T^{[n]}(y,z))$.

Case 3: If $\frac{i}{n}$, $\frac{j}{n}$ and $\frac{k}{n}$ belong to the same interval, i.e., $\frac{i}{n}, \frac{j}{n}, \frac{k}{n}\in{[a_{\alpha},e_{\alpha}]}$ for some $\alpha\in{A}$, then by assumption, there exist $\frac{p}{n},\frac{q}{n}\in{[a_{\alpha},e_{\alpha}]}$ such that
$T_{\alpha}(\varphi_{\alpha}(\frac{i}{n}),\varphi_{\alpha}(\frac{j}{n}))=\varphi_{\alpha}(\frac{p}{n})$ and $T_{\alpha}(\varphi_{\alpha}(\frac{j}{n}),\varphi_{\alpha}(\frac{k}{n}))=\varphi_{\alpha}(\frac{q}{n})$.
Thus
$$T(\frac{i}{n},\frac{j}{n})=a_{\alpha}+(e_{\alpha}-a_{\alpha})T_{\alpha}(\varphi_{\alpha}(\frac{i}{n}),\varphi_{\alpha}(\frac{j}{n}))=\varphi_{\alpha}^{-1}(\varphi_{\alpha}(\frac{p}{n}))=\frac{p}{n},$$
and similarly, $T(\frac{j}{n},\frac{k}{n})=\frac{q}{n}$. Subsequently,
$$T^{[n]}(x,y)=f_{n}(T(f_{n}(x),f_n(y)))=f_{n}(T(\frac{i}{n},\frac{j}{n}))=T(\frac{i}{n},\frac{j}{n})=\frac{p}{n}\in{\Gamma_{(n)}}$$
and $$ T^{[n]}(y,z)=f_{n}(T(\frac{j}{n},\frac{k}{n}))=T(\frac{j}{n},\frac{k}{n})=\frac{q}{n}\in{\Gamma_{(n)}}.$$
Consequently, we obtain
 \begin{eqnarray*}
 T^{[n]}(T^{[n]}(x,y),z)&=&f_{n}(T(f_{n}(T^{[n]}(x,y)),f_{n}(z)))\\
 &=&f_{n}(T(T(\frac{i}{n},\frac{j}{n}),\frac{k}{n}))\\
 &=&f_{n}(T(\frac{i}{n},T(\frac{j}{n},\frac{k}{n})))\quad\mbox{ by (T2)}\\
 &=&f_{n}(T(f_{n}(x),f_{n}(T(\frac{j}{n},\frac{k}{n}))))\\
 &=&T^{[n]}(x,T^{[n]}(y,z)).
 \end{eqnarray*}

 Cases 1 , 2 and 3 imply $T^{[n]}(T^{[n]}(x,y),z)=T^{[n]}(x,T^{[n]}(y,z))$ for any $x,y,z\in [0,1)$.

 Therefore, $T^{[n]}$ is a t-norm.
\end{proof}
\begin{example}\label{exp 3.1}
\emph{Let $T=(\langle 0,\frac{1}{3}, T_{\alpha_{1}}\rangle, \langle \frac{1}{3},1,T_{\alpha_{2}} \rangle )$ be an ordinal sum of $T_{\alpha_{1}}$ and $T_{\alpha_{2}}$, where $T_{\alpha_{i}}$ is a nilpotent t-norm with an additive generator $t_{\alpha_{i}}(x)=e_{\alpha_{i}}-a_{\alpha_{i}}-(e_{\alpha_{i}}-a_{\alpha_{i}})x$, $i=1,2$. Then $T^{[n]}$ given by Eq.\eqref{eq 04} is a t-norm.}

\emph{In fact, for all $\frac{i}{n},\frac{j}{n}\in[0,\frac{1}{3}]$,
\begin{eqnarray*}
T(\frac{i}{n},\frac{j}{n})&=&\frac{1}{3}T_{\alpha_{1}}(\frac{\frac{i}{n}-0}{\frac{1}{3}-0},\frac{\frac{j}{n}-0}{\frac{1}{3}-0})\\&=&
\frac{1}{3}t_{\alpha_{1}}^{(-1)}(t_{\alpha_{1}}(\frac{\frac{i}{n}}{\frac{1}{3}})+t_{\alpha_{1}}(\frac{\frac{j}{n}}{\frac{1}{3}})) \quad\quad\mbox{ from (3.16) of \cite{EP2000}}\\
&=&\frac{1}{3}t_{\alpha_{1}}^{(-1)}(\frac{1}{3}-\frac{i}{n}+\frac{1}{3}-\frac{j}{n})\\
&=&\frac{1}{3}t_{\alpha_{1}}^{(-1)}(\frac{2}{3}-\frac{i}{n}-\frac{j}{n})\\
&=&\frac{1}{3}\max\{0,3(\frac{1}{3}-(\frac{2}{3}-\frac{i}{n}-\frac{j}{n}))\}\\
&=&\max\{0,\frac{i}{n}+\frac{j}{n}-\frac{1}{3}\}.
\end{eqnarray*}
Thus $T(\frac{i}{n},\frac{j}{n})\in[0,\frac{1}{3}]$.
For every $\alpha_{i}$, consider the linear bijection $\varphi_{\alpha_i}: [a_{\alpha_i}, e_{\alpha_i}]\to [0,1]$ given by $\varphi_{\alpha_i}(x)=\frac{x-a_{\alpha_i}}{e_{\alpha_i}-a_{\alpha_i}}$. Take $\frac{p}{n}=\max\{0,\frac{i}{n}+\frac{j}{n}-\frac{1}{3}\}$. Then we get $$T_{\alpha_1}(\varphi_{\alpha_1}(\frac{i}{n}),\varphi_{\alpha_1}(\frac{j}{n}))=\varphi_{\alpha_1}(\frac{p}{n}) \ \mbox{for}\ \mbox{all}\ \frac{i}{n},\frac{j}{n}\in[0,\frac{1}{3}].$$
For all $\frac{i}{n},\frac{j}{n}\in[\frac{1}{3},1]$, we obtain
\begin{eqnarray*}
T(\frac{i}{n},\frac{j}{n})&=&\frac{1}{3}+\frac{2}{3}T_{\alpha_{2}}(\frac{\frac{i}{n}-\frac{1}{3}}{1-\frac{1}{3}},\frac{\frac{j}{n}-\frac{1}{3}}{1-\frac{1}{3}})\\
&=&\frac{1}{3}+\frac{2}{3}t_{\alpha_{2}}^{(-1)}(t_{\alpha_{2}}(\frac{\frac{i}{n}-\frac{1}{3}}{\frac{2}{3}})+t_{\alpha_{2}}(\frac{\frac{j}{n}-\frac{1}{3}}{\frac{2}{3}})) \quad\quad\mbox{ from (3.16) of \cite{EP2000}}\\
&=&\frac{1}{3}+\frac{2}{3}t_{\alpha_{2}}^{(-1)}(1-\frac{i}{n}+1-\frac{j}{n})\\
&=&\frac{1}{3}+\frac{2}{3}\max\{\frac{1-\frac{1}{3}-(2-\frac{i}{n}-\frac{j}{n})}{\frac{2}{3}},0\}\\
&=&\max\{\frac{i}{n}+\frac{j}{n}-1,\frac{1}{3}\}.
\end{eqnarray*}
Thus $T(\frac{i}{n},\frac{j}{n})\in{[\frac{1}{3},1]}$.
Now we take $\frac{q}{n}=\max\{\frac{i}{n}+\frac{j}{n}-1,\frac{1}{3}\}$. Then
$$T_{\alpha_2}(\varphi_{\alpha_2}(\frac{i}{n}),\varphi_{\alpha_2}(\frac{j}{n}))=\varphi_{\alpha_2}(\frac{q}{n}) \ \mbox{for}\ \mbox{all}\ \frac{i}{n},\frac{j}{n}\in[\frac{1}{3},1].$$}

\emph{Obviously, both $T_{\alpha_{1}}$ and $T_{\alpha_{2}}$ are continuous Archimedean t-norms. Therefore, by Theorem \ref{the 3.2}, $T^{[n]}$ is a t-norm.}

\emph{It is worth pointing out that $T$ is neither Archimedean nor strictly monotone, and it does not satisfy $T(x,y)<T_M(x,y)$ for all $x,y\in(0,1)$.}
\end{example}

These theorems and corollaries partly answer the question: What conditions guarantee that $T^{[n]}$ given by Eq.\eqref{eq 04} is a t-norm? Meanwhile, Corollary \ref{cor 3.2} theoretically demonstrates the issue in \cite[p.100]{EP2000}. Note that Example \ref{exp 2.1} shows that for t-norm $T=T_{P}$, $T^{[n]}$ is associative for $n=1,2,3$ and never associative for $n\geq{4}$. Motivated by Example \ref{exp 2.1}, we consider under what conditions $T^{[n]}$ given by Eq.\eqref{eq 04} is never associative while $T$ maintains its original condition, i.e., $T$ is strictly monotone.
\begin{theorem}\label{the 3.3}
If $T:[0,1]^2\rightarrow[0,1]$ is a strictly monotone t-norm with a convex additive generator $t$ satisfying $t(\frac{n-2}{n})+t(\frac{2}{n})\leq{t(\frac{1}{n})}$, then $T^{[n]}$ given by Eq.\eqref{eq 04} is never associative for $n\geq{4}$.
\end{theorem}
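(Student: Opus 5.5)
The plan is to exhibit, for every $n\geq 4$, one explicit triple $(x,y,z)\in[0,1)^3$ at which associativity fails. I take the same triple as in Example \ref{exp 2.1}(iii): $x,y\in[\frac{n-1}{n},1)$ and $z\in[\frac{2}{n},\frac{3}{n})$, so that $f_n(x)=f_n(y)=\frac{n-1}{n}$ and $f_n(z)=\frac{2}{n}$. The aim is to show
$$T^{[n]}(T^{[n]}(x,y),z)=\tfrac1n \quad\text{and}\quad T^{[n]}(x,T^{[n]}(y,z))=0.$$
Throughout I use $T(u,v)=t^{(-1)}(t(u)+t(v))$ (cf.\ (3.16) of \cite{EP2000}), with $t$ strictly decreasing and $t(1)=0$. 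Since $t^{(-1)}$ is non-increasing, the bound $t(u)+t(v)\leq t(c)$ gives $T(u,v)\geq c$. I also use strict monotonicity in the form $T(u,v)<\min\{u,v\}$ for $u,v\in(0,1)$, as already used in Corollary \ref{cor 3.1}(ii).

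\emph{Right-hand side.} First I bound $T(\frac{2}{n},\frac{n-1}{n})$. Because $n\geq4$ gives $\frac{n-2}{n}\geq\frac{2}{n}$ and $\frac{n-1}{n}\geq\frac{n-2}{n}$, monotonicity and the hypothesis $t(\frac{n-2}{n})+t(\frac{2}{n})\leq t(\frac1n)$ give
$$T(\tfrac{2}{n},\tfrac{n-1}{n})\geq T(\tfrac{2}{n},\tfrac{n-2}{n})\geq \tfrac1n.$$
Strictness gives the upper bound $T(\frac{2}{n},\frac{n-1}{n})<\frac2n$. Hence $T^{[n]}(y,z)=\frac1n$. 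Then $T(\frac{n-1}{n},\frac1n)<\frac1n$ by strictness, so $T^{[n]}(x,T^{[n]}(y,z))=0$.

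\emph{Left-hand side.} Let $w=T(\frac{n-1}{n},\frac{n-1}{n})$. Strictness gives $w<\frac{n-1}{n}$. For the lower bound I use convexity of $t$: the point $\frac{n-1}{n}$ is the midpoint of $\frac{n-2}{n}$ and $1$, and $t(1)=0$, so
$$2t(\tfrac{n-1}{n})\leq t(\tfrac{n-2}{n}).$$
It follows that $w=t^{(-1)}(2t(\frac{n-1}{n}))\geq\frac{n-2}{n}$, and therefore $T^{[n]}(x,y)=f_n(w)=\frac{n-2}{n}$. Next, the hypothesis gives $T(\frac{n-2}{n},\frac2n)\geq\frac1n$, and strictness gives $T(\frac{n-2}{n},\frac2n)<\frac2n$. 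So $T^{[n]}(T^{[n]}(x,y),z)=\frac1n\neq0$, and $T^{[n]}$ is not associative.

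The main obstacle is the lower estimate $w\geq\frac{n-2}{n}$. This is exactly where convexity of $t$ is needed. The midpoint inequality together with $t(1)=0$ should settle it, but I must check carefully that $t^{(-1)}(t(c))=c$ and that $t^{(-1)}$ reverses the inequality. Both hold because $t$ is strictly decreasing and $t^{(-1)}$ is the pseudo-inverse, non-increasing and satisfying $t^{(-1)}\circ t=\mathrm{id}$. The remaining steps are the same routine floor computations as in Example \ref{exp 2.1}(iii).
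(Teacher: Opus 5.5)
Your proposal is correct and follows essentially the same route as the paper: the same triple $x,y\in[\frac{n-1}{n},1)$, $z\in[\frac{2}{n},\frac{3}{n})$; the midpoint-convexity bound $2t(\frac{n-1}{n})\leq t(\frac{n-2}{n})$ to get $T^{[n]}(x,y)=\frac{n-2}{n}$; and the hypothesis to get $T(\frac{n-2}{n},\frac{2}{n})\in[\frac{1}{n},\frac{2}{n})$. The only minor difference is that you determine $T^{[n]}(y,z)=\frac{1}{n}$ exactly, whereas the paper needs only $T^{[n]}(y,z)\in\{0,\frac{1}{n}\}$, together with monotonicity, to conclude $T^{[n]}(x,T^{[n]}(y,z))=0$.
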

\begin{proof}
Choose $x,y\in{[\frac{n-1}{n},1)}, z\in{[\frac{2}{n},\frac{3}{n})}$, where $n\geq{4}$. Then by Eq.\eqref{eq 03} we have $f_{n}(x)=f_{n}(y)=\frac{n-1}{n}$ and $f_{n}(z)=\frac{2}{n}$. Subsequently, $$T^{[n]}(x,y)=f_{n}(T(f_{n}(x),f_{n}(y)))=f_{n}(T(\frac{n-1}{n},\frac{n-1}{n})),$$
$$T^{[n]}(y,z)=f_{n}(T(f_{n}(y),f_{n}(z)))=f_{n}(T(\frac{n-1}{n},\frac{2}{n})).$$
Note that $\frac{n-1}{n}=\frac{1}{2}(\frac{n-2}{n}+1)$ and $t(1)=0$.
Since $t$ is convex, we have $$t(\frac{n-1}{n})\leq \frac{1}{2}(t(\frac{n-2}{n})+t(1))=\frac{1}{2}t(\frac{n-2}{n}),$$ i.e., $2t(\frac{n-1}{n})\leq t(\frac{n-2}{n})$. Thus from (3.16) of \cite{EP2000}, we have
\begin{equation}\label{eq 00007}T(\frac{n-1}{n},\frac{n-1}{n})=t^{(-1)}(2t(\frac{n-1}{n}))\geq t^{(-1)}t(\frac{n-2}{n})=\frac{n-2}{n}.\end{equation}

Because $T$ is strictly monotone, we get $T(\frac{n-1}{n},\frac{n-1}{n})<\frac{n-1}{n}$ and $T(\frac{n-1}{n},\frac{2}{n})<\frac{2}{n}$, which together with Eq.\eqref{eq 00007} imply that
$$ \frac{n-2}{n}\leq{T(\frac{n-1}{n},\frac{n-1}{n})}<{\frac{n-1}{n}}.$$
Then by Eq.\eqref{eq 03}, we get \begin{equation}\label{eq010000}T^{[n]}(x,y)=f_{n}(T(\frac{n-1}{n},\frac{n-1}{n}))=\frac{n-2}{n}\mbox{ and } T^{[n]}(y,z)=f_{n}(T(\frac{n-1}{n},\frac{2}{n}))\in\{0,\frac{1}{n}\}.\end{equation}
Thus, we have $$T^{[n]}(x,T^{[n]}(y,z))=f_{n}(T(f_{n}(x),f_{n}(T^{[n]}(y,z))))=f_{n}(T(\frac{n-1}{n},T^{[n]}(y,z)))\leq f_{n}(T(\frac{n-1}{n},\frac{1}{n}))=0.$$
On the other hand, since $t(\frac{n-2}{n})+t(\frac{2}{n})\leq{t(\frac{1}{n})}$, we have $\frac{1}{n}\leq{T(\frac{n-2}{n},\frac{2}{n})}<\frac{2}{n}$, which together with Eq.\eqref{eq010000} implies that $$T^{[n]}(T^{[n]}(x,y),z)=f_{n}(T(f_{n}(T^{[n]}(x,y)),f_{n}(z)))=f_{n}(T(T^{[n]}(x,y),\frac{2}{n}))=f_{n}(T(\frac{n-2}{n},\frac{2}{n}))=\frac{1}{n}.$$

Therefore, $T^{[n]}(T^{[n]}(x,y),z)\neq{T^{[n]}(x,T^{[n]}(y,z))}$.
\end{proof}
\begin{remark}\label{rem 3.2}
\emph{\renewcommand{\labelenumi}{(\roman{enumi})}
In Theorem \ref{the 3.3}, $n\in \mathbb{N}$ must be greater than or equal to $4$. Otherwise, one of the three conditions (i) , (ii) and (iii) in Corollary \ref{cor 3.2} is true. Therefore, both $T^{[2]}$ and $T^{[3]}$ are associative.}
\end{remark}

\begin{proposition}\label{pro 3.3}\emph{(\cite{MR1978})}
Let $T:[0,1]^2\rightarrow[0,1]$ be a continuous Archimedean t-norm and $t:[0,1]\rightarrow{[0,\infty]}$ be its additive generator. Then the following are equivalent:\renewcommand{\labelenumi}{(\roman{enumi})}
\begin{enumerate}
\item $T$ satisfies the Lipschitz property with constant $1$.
\item $t$ is a convex function.
\end{enumerate}
\end{proposition}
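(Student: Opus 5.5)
We prove the two implications separately, working with the representation $T(x,y)=t^{(-1)}(t(x)+t(y))$ from (3.16) of \cite{EP2000}. Here $t:[0,1]\to[0,\infty]$ is continuous and strictly decreasing with $t(1)=0$. The pseudo-inverse $t^{(-1)}$ is continuous and non-increasing on $[0,\infty]$, equals $t^{-1}$ on $[0,t(0)]$, and equals $0$ beyond $t(0)$. The $1$-Lipschitz property means $|T(x_1,y_1)-T(x_2,y_2)|\le |x_1-x_2|+|y_1-y_2|$. By commutativity and monotonicity (T1), (T3), this is equivalent to the one-variable inequality
\[ T(x_2,y)-T(x_1,y)\le x_2-x_1 \quad \mbox{for all } x_1\le x_2 \mbox{ and all } y. \]

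\textbf{(ii)$\Rightarrow$(i).} Let $t$ be convex and put $h=t^{(-1)}$. Then $h$ is the inverse of a convex decreasing function on $[0,t(0)]$, so it is convex and decreasing there. It is constant $0$ afterwards, so $h$ is convex on all of $[0,\infty]$. Fix $y$ with $t(y)=c$ and define $g(u)=h(u+c)$. Then $T(x,y)=g(t(x))$, and we must show $g(t(x_1))\ge g(t(x_2))-(x_2-x_1)$ for $x_1\le x_2$.

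Set $u_i=t(x_i)$, so $u_1\ge u_2$ and $x_i=h(u_i)$. The inequality becomes
\[ h(u_2+c)-h(u_1+c)\le h(u_2)-h(u_1). \]
This is exactly the statement that the decrements of a convex non-increasing function over intervals of equal length $u_1-u_2$ shrink as the interval is shifted to the right by $c\ge0$. That is a standard slope-monotonicity property of convex functions.

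Some care is needed when $u_1+c>t(0)$, i.e.\ at the point where $h$ vanishes identically. Convexity of $h$ on the whole half-line still covers this case. The case $t(0)=\infty$ is handled the same way.

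\textbf{(i)$\Rightarrow$(ii).} Assume the $1$-Lipschitz property and suppose $t$ is not convex. We will show $h$ is convex on $[0,t(0)]$; since $t=h^{-1}$ there and both are decreasing, $t$ is then convex.

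Take $y$ with $t(y)=c$ small. The Lipschitz inequality above, written via $h$, says
\[ h(v)-h(v+c)\le h(v-d)-h(v-d+c) \quad \mbox{for all } v\ge d\ge 0, \]
with all arguments in $[0,t(0)]$. In words, the function $v\mapsto h(v)-h(v+c)$ is non-increasing. This holds for every $c$ in the range of $t$, and this range includes all small $c>0$ because $t$ is continuous.

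Taking $c=\delta$ and telescoping yields midpoint-type inequalities $h(v)-h(v+\delta)\ge h(v+\delta)-h(v+2\delta)$. Hence $h$ is midpoint convex on dyadic-type grids, and continuity of $h$ then gives convexity. This step, passing from ``decrements over intervals of every small length decrease'' to convexity via continuity, is the main technical point. It is the place to be careful about endpoints, especially if $t(0)<\infty$ (the nilpotent case), where the arguments must stay inside $[0,t(0)]$.
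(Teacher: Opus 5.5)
The paper gives no proof of this proposition. It is quoted from the literature and used only as a black box in the proof of Corollary~\ref{cor 3.3}, so your argument has nothing in the paper to compare against. It is a correct, self-contained proof.

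Its organizing idea is the right one. Put $h=t^{(-1)}$. Then the $1$-Lipschitz property is equivalent to the statement that, for every $c\in[0,t(0)]$, the decrement $w\mapsto h(w)-h(w+c)$ is non-increasing for $w\in[0,t(0)]$. For a continuous $h$ this is equivalent to convexity, so (i)$\Leftrightarrow$(ii) are two halves of one characterization. The only remaining step is transferring convexity between $h$ and $t$. That follows by applying the decreasing map $h$ (respectively $t$) to the convexity inequality of $t$ (respectively $h$). The citation buys the paper brevity. Your route buys a proof that needs only $T(x,y)=t^{(-1)}(t(x)+t(y))$ and elementary facts about convex functions, with no detour through copulas or $2$-increasingness.

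Three places can be tightened.

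\emph{(i)$\Rightarrow$(ii).} The hypothesis ``suppose $t$ is not convex'' is never used. You also do not need small $c$, telescoping, or dyadic grids, because the range of $t$ is all of $[0,t(0)]$. For any $a<b$ in $[0,t(0)]$, take $v=\frac{a+b}{2}$ and $c=d=\frac{b-a}{2}$ in your inequality $h(v)-h(v+c)\le h(v-d)-h(v-d+c)$. This gives $2h(\frac{a+b}{2})\le h(a)+h(b)$ directly. Midpoint convexity plus continuity of $h$ then yields convexity by the standard Jensen argument, so this is not a delicate step.

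\emph{(ii)$\Rightarrow$(i).} Convexity of the glued function $h$ deserves one line. On $[0,t(0)]$ it is convex and non-increasing, so its one-sided slopes are non-positive and non-decreasing; beyond $t(0)$ it is identically $0$, with slope $0$.

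\emph{Degenerate cases.} When $t(0)=\infty$, the cases $x_1=0$ or $y=0$ are trivial, since $T\le\min$.
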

\begin{corollary}\label{cor 3.3}
Let $T:[0,1]^2\rightarrow[0,1]$ be a strictly monotone t-norm with an additive generator $t:[0,1]\rightarrow{[0,\infty]}$ satisfying $t(\frac{n-2}{n})+t(\frac{2}{n})\leq{t(\frac{1}{n})}$. If $T$ satisfies the Lipschitz property with constant $1$, then $T^{[n]}$ given by Eq.\eqref{eq 04} is never associative for $n\geq{4}$.
\end{corollary}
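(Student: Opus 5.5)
The plan is to reduce the statement to Theorem \ref{the 3.3}. Since that theorem already gives non-associativity of $T^{[n]}$ for $n\geq 4$ whenever $T$ is strictly monotone and has a convex additive generator $t$ with $t(\frac{n-2}{n})+t(\frac{2}{n})\leq t(\frac{1}{n})$, it suffices to show that the Lipschitz hypothesis forces $t$ to be convex. This is exactly what Proposition \ref{pro 3.3} provides, provided $T$ falls under its hypotheses, namely that $T$ is a continuous Archimedean t-norm with additive generator $t$.

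The first step is to check these hypotheses. Archimedeanity comes for free: $T$ has an additive generator, so by Proposition 3.29 of \cite{EP2000} (as already used in the proof of Corollary \ref{cor 3.1}(iii)) $T$ is Archimedean. Continuity comes from the Lipschitz property with constant $1$, since
$|T(x,y)-T(u,v)|\leq |x-u|+|y-v|$
makes $T$ continuous on $[0,1]^2$. Hence $T$ is a continuous Archimedean t-norm. Its additive generator is unique up to a positive multiplicative constant, and the given $t$ is one of them.

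The second step applies Proposition \ref{pro 3.3}, (i)$\Rightarrow$(ii), to conclude that $t$ is convex. One point needs care: the inequality $t(\frac{n-2}{n})+t(\frac{2}{n})\leq t(\frac{1}{n})$ is stated for the given $t$, while Proposition \ref{pro 3.3} may refer to some generator of $T$. Both the inequality and convexity are invariant under multiplication of $t$ by a positive constant, so this causes no difficulty. The third step is to invoke Theorem \ref{the 3.3}, whose hypotheses are now all satisfied, and conclude that $T^{[n]}$ is never associative for $n\geq 4$.

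The only real obstacle is the compatibility check in the first step: Proposition \ref{pro 3.3} requires continuity, which is not assumed explicitly and must be extracted from the Lipschitz condition. This is immediate, but it should be stated. The rest is a direct chaining of earlier results.
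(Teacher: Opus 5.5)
Your proposal is correct and takes the paper's route: the paper's proof simply chains Proposition~\ref{pro 3.3} and Theorem~\ref{the 3.3}. Your explicit checks are what make that chaining legitimate, and the paper leaves them implicit: continuity of $T$ follows from the $1$-Lipschitz property, and Archimedeanity follows from the existence of an additive generator.
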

\begin{proof}
It immediately follows from Proposition \ref{pro 3.3} and Theorem \ref{the 3.3}.
\end{proof}

\begin{corollary}\label{cor 3.4}
If $T:[0,1]^2\rightarrow[0,1]$ is a strictly monotone t-norm with twice differentiable additive generator $t:[0,1]\rightarrow{[0,\infty]}$ satisfying $t^{\prime\prime}\geq0$ and $t(\frac{n-2}{n})+t(\frac{2}{n})\leq{t(\frac{1}{n})}$, then $T^{[n]}$ given by Eq.\eqref{eq 04} is never associative for $n\geq{4}$.
\end{corollary}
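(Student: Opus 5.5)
The plan is to reduce Corollary \ref{cor 3.4} directly to Theorem \ref{the 3.3}. Every hypothesis of Theorem \ref{the 3.3} is already assumed, except convexity of the additive generator $t$. So the only work is to show that $t''\geq 0$ on $[0,1]$ forces $t$ to be convex.

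For the convexity step I would use the standard calculus fact. If $t$ is twice differentiable on $(0,1)$ with $t''\geq 0$, then $t'$ is non-decreasing, by the mean value theorem applied to $t'$.

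To get the convexity inequality from this, fix $a<b$ in $[0,1]$, $\lambda\in(0,1)$, and set $c=\lambda a+(1-\lambda)b$. Applying the mean value theorem to $t$ on $[a,c]$ and on $[c,b]$ gives points $\xi_1\in(a,c)$ and $\xi_2\in(c,b)$ with
\[
\frac{t(c)-t(a)}{c-a}=t'(\xi_1)\leq t'(\xi_2)=\frac{t(b)-t(c)}{b-c}.
\]
Rearranging yields $t(c)\leq \lambda t(a)+(1-\lambda)t(b)$, which is exactly convexity of $t$.

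The one point needing care is the endpoint $0$, where $t$ may take the value $+\infty$ (the strict case). Convexity on $(0,1]$ follows as above. At $0$ the inequality holds trivially when $t(0)=\infty$. When $t(0)<\infty$, it follows by continuity, since $t$ is differentiable and hence continuous there.

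In any case, the proof of Theorem \ref{the 3.3} only uses convexity at the points $\frac{n-2}{n}$, $\frac{n-1}{n}$ and $1$, namely $t(\frac{n-1}{n})\leq\frac12\bigl(t(\frac{n-2}{n})+t(1)\bigr)$. These points lie in $(0,1]$ for $n\geq 4$, so even the endpoint issue is harmless.

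With convexity established, Theorem \ref{the 3.3} applies verbatim and gives that $T^{[n]}$ is never associative for $n\geq 4$. There is no real obstacle; the only subtlety is the possible infinite value $t(0)=\infty$, handled as above.
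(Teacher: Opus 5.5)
Your proposal is correct and follows the paper's proof: derive convexity of $t$ from $t''\geq 0$, then invoke Theorem \ref{the 3.3}. The paper simply asserts that convexity "is clear"; you additionally supply the mean-value-theorem argument and note that the endpoint value $t(0)=\infty$ is irrelevant, since Theorem \ref{the 3.3} only uses the midpoint inequality at $\frac{n-2}{n}$, $\frac{n-1}{n}$ and $1$.
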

\begin{proof}
Since $T$ is a strict t-norm with twice differentiable additive generator $t$ satisfying $t^{\prime\prime}\geq0$, it is clear that $t$ is a convex function. By Theorem \ref{the 3.3}, $T^{[n]}$ given by Eq.\eqref{eq 04} is never associative for $n\geq{4}$.
\end{proof}
\begin{example}\label{exp 3.2}
\emph{In \cite{EP2000}, a strict t-norm with a convex additive generator $t$ satisfying $t(\frac{n-2}{n})+t(\frac{2}{n})\leq{t(\frac{1}{n})}$ is, for example,
the Schweizer-Sklar t-norm $T_{\lambda}^{SS}$ for $\lambda\in{(-\infty,0)}$, the Hamacher t-norm $T_{\lambda}^{H}$ for $\lambda\in{[0,1)}$, the Frank
t-norm $T_{\lambda}^{F}$ for $\lambda\in{(0,1]}$, the Aczel-Alsina t-norm $T_{\lambda}^{AA}$ for $\lambda\in{[1,\infty)}$, the Dombi t-norm $T_{\lambda}^{D}$ for $\lambda\in{[1,\infty)}$ and the Sugeno-Weber t-norm $T_{\lambda}^{SW}$ for $\lambda=\infty$. Their respective additive generators are as follows.
$$t_{\lambda}^{SS}(x)=\left\{\begin{array}{ll}  -logx,  &\text{}\lambda=0,\\
\frac{1-x^{\lambda}}{\lambda},  &\text{} \lambda\in{(-\infty, 0)}, \end{array}\right.\quad
t_{\lambda}^{H}(x)=\left\{\begin{array}{ll}  \frac{1-x}{x},  &\text{}\lambda=0, \\
log(\frac{\lambda+(1-\lambda)x}{x}),  &\text{} \lambda\in{(0, 1)}, \end{array}\right.$$
$$t_{\lambda}^{F}(x)=\left\{\begin{array}{ll}  -logx,  &\text{}\lambda=1, \\
log(\frac{\lambda-1}{\lambda^{x}-1}),  &\text{} \lambda\in{(0, 1)}, \end{array}\right. \quad
t_{\lambda}^{AA}(x)=(-logx)^{\lambda}\mbox{ with }\lambda\in{[1,\infty)},$$
$$t_{\lambda}^{D}(x)=(\frac{1-x}{x})^{\lambda}\mbox{ with }\lambda\in{[1,\infty)}\mbox{ and }
t_{\lambda}^{SW}(x)=-logx\mbox{ with }\lambda=\infty.$$ Therefore, by Theorem \ref{the 3.3}, $T^{[n]}$ given by Eq.\eqref{eq 04} is never associative for $n\geq{4}$.}
\end{example}

The above results can be dually transformed into t-conorms. Indeed, similar to the construction of $f_{n}$ with $n\in\mathbb N$, we consider the function $g_{n}:[0,1]\rightarrow{\Gamma_{(n)}}$ defined by $g_{n}(x)=\frac{1}{n}{\lceil nx \rceil}$ (where $\lceil{ nx \rceil}$ denotes the smallest integer greater than or equal to $nx$). Through a simple computation, we have
\begin{equation*}
g_{n}(x)=\frac{1}{n}{\lceil nx \rceil}=\left\{\begin{array}{rcl}   0,  &\text{}x=0,   \\
\frac{1}{n}, &\text{}x\in{(0,\frac{1}{n}]},\\\\
 \frac{2}{n}, &\text{}x\in{(\frac{1}{n},\frac{2}{n}]},\\
 \vdots &\text{}\vdots\\
 \frac{n-1}{n}, &\text{}x\in{(\frac{n-2}{n},\frac{1}{n-1}]},\\\\
 1, &\text{}x\in{(\frac{n-1}{n},1]}.\\
  \end{array}\right.
 \end{equation*}
 It is easy to check that $g_{n}(x)$ is a non-decreasing, left-continuous function. Moreover, we immediately get \begin{equation*}f_{n}(x)=1-g_{n}(1-x).\end{equation*}
Let $S:[0,1]^{2}\rightarrow{[0,1]}$ be a t-conorm. Define a binary operation $S^{[n]}:[0,1]^{2}\rightarrow{[0,1]}$ by
\begin{equation}\label{eq 06}
S^{[n]}(x,y)=\left\{\begin{array}{ll}   g_{n}(S(g_{n}(x),g_{n}(y))),  &\text{}(x,y)\in(0,1]^{2},   \\
\max\{x,y\},  &\text{otherwise.}  \end{array}\right.
\end{equation}
Then one easily check that if $T:[0,1]^{2}\rightarrow{[0,1]}$ is the dual t-norm of $S$ then $T^{[n]}(x,y)=1-S^{[n]}(1-x,1-y)$ for any $x,y\in{[0,1]}$. Therefore, we dually have the following statements.
\begin{lemma}\label{lem 3.1}For any $n\in\mathbb N$, the binary operation $S^{[n]}:[0,1]^{2}\rightarrow{[0,1]}$ satisfies the axioms (S1), (S3) and (S4), but not necessarily (S2).
\end{lemma}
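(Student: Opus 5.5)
The natural route is duality: transfer Lemma \ref{lem 2.1} through the identity $T^{[n]}(x,y)=1-S^{[n]}(1-x,1-y)$, where $T$ is the t-norm dual to $S$, i.e. $T(x,y)=1-S(1-x,1-y)$. I would first verify this identity directly rather than just quote it. For $(x,y)\in(0,1]^2$ we have $(1-x,1-y)\in[0,1)^2$, and using $f_n(u)=1-g_n(1-u)$ (equivalently $g_n(v)=1-f_n(1-v)$) we get
\begin{equation*}
1-S^{[n]}(x,y)=1-g_n\bigl(S(g_n(x),g_n(y))\bigr)=f_n\bigl(1-S(1-f_n(1-x),1-f_n(1-y))\bigr)=f_n\bigl(T(f_n(1-x),f_n(1-y))\bigr),
\end{equation*}
which is $T^{[n]}(1-x,1-y)$. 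Otherwise some argument is $0$, so some argument of $T^{[n]}$ equals $1$, and $1-\max\{x,y\}=\min\{1-x,1-y\}$. Hence $S^{[n]}(x,y)=1-T^{[n]}(1-x,1-y)$ for all $x,y\in[0,1]$.

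Next I would transfer the axioms one at a time using Lemma \ref{lem 2.1}, which gives (T1), (T3) and (T4) for $T^{[n]}$. Commutativity (S1) follows immediately from (T1). For monotonicity (S3), if $y\le z$ then $1-y\ge 1-z$, so (T3) gives $T^{[n]}(1-x,1-y)\ge T^{[n]}(1-x,1-z)$, and subtracting from $1$ yields $S^{[n]}(x,y)\le S^{[n]}(x,z)$. For the neutral element (S4), $S^{[n]}(x,0)=1-T^{[n]}(1-x,1)=1-(1-x)=x$. This case is also covered directly by the ``otherwise'' branch of Eq.~\eqref{eq 06}, since $\max\{x,0\}=x$.

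For the failure of (S2), I would exhibit a counterexample by dualizing Example \ref{exp 2.1}(iii). Take $S=S_P$, the probabilistic sum, whose dual is $T_P$, and take $n\ge4$. Associativity of $S^{[n]}$ at $(x,y,z)$ is equivalent to associativity of $T^{[n]}$ at $(1-x,1-y,1-z)$, because
\begin{equation*}
S^{[n]}\bigl(S^{[n]}(x,y),z\bigr)=1-T^{[n]}\bigl(T^{[n]}(1-x,1-y),1-z\bigr).
\end{equation*}
So choosing $1-x,1-y\in[\frac{n-1}{n},1)$ and $1-z\in[\frac{2}{n},\frac{3}{n})$ breaks (S2), by Eqs.~\eqref{001eq} and \eqref{002eq}.

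The only real obstacle is the bookkeeping at the boundary. One has to check that the domains $(0,1]^2$ for $S^{[n]}$ and $[0,1)^2$ for $T^{[n]}$ correspond exactly under $u\mapsto 1-u$. One also has to check that the relation between $f_n$ and $g_n$ holds at every point: at the grid points, and at the endpoints $0$ and $1$. Both follow from $\lceil m\rceil=-\lfloor -m\rfloor$.
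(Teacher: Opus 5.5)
Your proposal is correct and takes the same route as the paper. The paper obtains Lemma \ref{lem 3.1} by duality from Lemma \ref{lem 2.1} via the identity $T^{[n]}(x,y)=1-S^{[n]}(1-x,1-y)$, which it only asserts; you additionally verify that identity, including the boundary cases via $f_n(u)=1-g_n(1-u)$, and you make the failure of (S2) concrete by dualizing Example \ref{exp 2.1}(iii) with $S=S_P$ and $n\geq 4$.
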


\begin{proposition}\label{pro 3.5}
Let $S:[0,1]^2\rightarrow[0,1]$ be a t-conorm and $S(\frac{i}{n},\frac{j}{n})\in{(\frac{n-2}{n},1]}$ for all $i,j\in{\{1,2,\dots, n-1, n\}}$. If one of the following condition holds,\renewcommand{\labelenumi}{(\roman{enumi})}
\begin{enumerate}
\item $S(x,y)>S_M(x,y)$ for all $x,y\in(0,1)$,
\item $S$ is Archimedean,
\item $S$ is strictly monotone,
\end{enumerate}
then $S^{[n]}$ given by Eq.\eqref{eq 06} is a t-conorm.
\end{proposition}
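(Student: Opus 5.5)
We argue by duality with the t-norm results, reducing Proposition \ref{pro 3.5} to Theorem \ref{the 3.1} and Corollary \ref{cor 3.1}. Let $T(x,y)=1-S(1-x,1-y)$ be the dual t-norm of $S$. As observed just before Lemma \ref{lem 3.1}, $T^{[n]}(x,y)=1-S^{[n]}(1-x,1-y)$ for all $x,y\in[0,1]$. This identity follows from $f_n(x)=1-g_n(1-x)$ and from the fact that $(x,y)\in[0,1)^2$ iff $(1-x,1-y)\in(0,1]^2$, with $\min$ and $\max$ interchanged in the remaining case. Hence $S^{[n]}$ is associative iff $T^{[n]}$ is. By Lemma \ref{lem 3.1} only (S2) needs checking, so it suffices to show that $T$ satisfies the hypotheses of Theorem \ref{the 3.1}.

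First we translate the grid condition. For $i,j\in\{1,\dots,n\}$ put $i'=n-i$ and $j'=n-j$, which range over $\{0,\dots,n-1\}$. Then
\[
T\Big(\tfrac{i'}{n},\tfrac{j'}{n}\Big)=1-S\Big(\tfrac{i}{n},\tfrac{j}{n}\Big).
\]
The assumption $S(\frac{i}{n},\frac{j}{n})\in(\frac{n-2}{n},1]$ is therefore equivalent to $T(\frac{i'}{n},\frac{j'}{n})\in[0,\frac{2}{n})$ for all $i',j'\in\{0,\dots,n-1\}$, which is exactly the grid condition of Theorem \ref{the 3.1}.

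Next we translate each of the alternatives (i)--(iii).
\begin{enumerate}
\item[(i)] Since $1-S_M(1-x,1-y)=T_M(x,y)$, the condition $S>S_M$ on $(0,1)^2$ becomes $T<T_M$ on $(0,1)^2$. Theorem \ref{the 3.1} then applies directly.
\item[(ii)] $S$ is Archimedean iff its dual $T$ is Archimedean, since powers dualize: $(1-x)_T^{(k)}=1-x_S^{(k)}$. Corollary \ref{cor 3.1}(i) then applies.
\item[(iii)] Strict monotonicity on $(0,1)^2$ is preserved under $x\mapsto1-x$, so $T$ is strictly monotone and Corollary \ref{cor 3.1}(ii) applies.
\end{enumerate}
In every case $T^{[n]}$ is associative, and hence $S^{[n]}$ is a t-conorm.

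The only step that needs care is the duality identity $T^{[n]}(x,y)=1-S^{[n]}(1-x,1-y)$, which the paper asserts without proof. I would verify it explicitly. For $(x,y)\in[0,1)^2$,
\[
1-g_n\big(S(g_n(1-x),g_n(1-y))\big)
=f_n\big(1-S(1-f_n(x),1-f_n(y))\big)
=f_n\big(T(f_n(x),f_n(y))\big),
\]
where we use $g_n(1-x)=1-f_n(x)$ and $1-g_n(u)=f_n(1-u)$. The boundary case is the trivial $\min$/$\max$ swap. One should also double-check the precise form of $g_n$ used here: the displayed formula contains a typo ($\frac{1}{n-1}$), but the definition $g_n(x)=\frac{1}{n}\lceil nx\rceil$ is what the argument uses.
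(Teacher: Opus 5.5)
Your proposal is correct and follows the paper's route: the paper gives no separate proof of Proposition \ref{pro 3.5}, only that it holds ``dually'' via $T^{[n]}(x,y)=1-S^{[n]}(1-x,1-y)$. Your reduction, which transfers the grid condition and each of (i)--(iii) to the dual t-norm and then invokes Theorem \ref{the 3.1} and Corollary \ref{cor 3.1}, is exactly that argument with the details written out. Your explicit check of the duality identity through $g_n(1-x)=1-f_n(x)$ also fills in a step the paper leaves unstated. You are right about the $\frac{1}{n-1}$ typo, which should read $\frac{n-1}{n}$.
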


\begin{corollary}\label{cor 3.5}
Let $S:[0,1]^{2}\rightarrow{[0,1]}$ be a t-conorm. If one of the following condition holds,\renewcommand{\labelenumi}{(\roman{enumi})}
\begin{enumerate}
\item $S(x,y)>S_M(x,y)$ for all $x,y\in(0,1)$,
\item $S$ is Archimedean,
\item $S$ is strictly monotone,
\end{enumerate}
then both $S^{[2]}$ and $S^{[3]}$ are t-conorms.
\end{corollary}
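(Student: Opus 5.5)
Corollary \ref{cor 3.5} is the dual of Corollary \ref{cor 3.2}, and I would prove it by the duality $T^{[n]}(x,y)=1-S^{[n]}(1-x,1-y)$ noted just before Lemma \ref{lem 3.1}. Alternatively, I would derive it from Proposition \ref{pro 3.5} by checking its range hypothesis for $n=2,3$. The first route is cleanest. Let $T(x,y)=1-S(1-x,1-y)$ be the dual t-norm of $S$. Each condition transfers to $T$:
\begin{enumerate}
\item[(i)] $S(x,y)>S_M(x,y)$ on $(0,1)^2$ becomes $T(x,y)<T_M(x,y)$ on $(0,1)^2$, since $1-\max\{1-x,1-y\}=\min\{x,y\}$ and $(0,1)$ is invariant under $x\mapsto 1-x$.
\item[(ii)] $S$ Archimedean gives $T$ Archimedean, because the powers satisfy $x_T^{(m)}=1-(1-x)_S^{(m)}$.
\item[(iii)] $S$ strictly monotone gives $T$ strictly monotone, because $u\mapsto 1-u$ is a strictly decreasing bijection.
\end{enumerate}
These are the standard duality facts from \cite{EP2000}, and I would cite them with a one-line verification each.

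Next, Corollary \ref{cor 3.2} applied to $T$ shows that $T^{[2]}$ and $T^{[3]}$ are t-norms, so in particular they are associative. The key step is to verify the identity $T^{[n]}(x,y)=1-S^{[n]}(1-x,1-y)$ carefully. It uses $f_n(x)=1-g_n(1-x)$, and the boundary cases match: $(x,y)\in[0,1)^2$ if and only if $(1-x,1-y)\in(0,1]^2$, and on the complement $\min\{x,y\}=1-\max\{1-x,1-y\}$. With $x'=1-x$, $y'=1-y$, in the interior case one computes
\begin{align*}
1-S^{[n]}(x',y') &= 1-g_n\bigl(S(g_n(x'),g_n(y'))\bigr)\\
&= 1-g_n\bigl(1-T(1-g_n(x'),1-g_n(y'))\bigr)\\
&= f_n\bigl(T(f_n(x),f_n(y))\bigr).
\end{align*}

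Associativity then transfers directly. Setting $\sigma(u)=1-u$, we have $S^{[n]}(u,v)=\sigma\bigl(T^{[n]}(\sigma u,\sigma v)\bigr)$. Therefore $S^{[n]}(S^{[n]}(u,v),w)=\sigma\bigl(T^{[n]}(T^{[n]}(\sigma u,\sigma v),\sigma w)\bigr)$, and likewise for the other bracketing, so associativity of $T^{[n]}$ gives (S2) for $S^{[n]}$. Together with Lemma \ref{lem 3.1}, which supplies (S1), (S3) and (S4), this shows $S^{[2]}$ and $S^{[3]}$ are t-conorms.

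The main obstacle is the careful verification of the duality identity, especially the boundary behaviour of $g_n$ versus $f_n$ at $0$ and $1$. Everything else is bookkeeping.
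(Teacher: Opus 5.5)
Your proposal is correct and follows the paper's own argument: the paper gives no separate proof and obtains Corollary \ref{cor 3.5} by duality from Corollary \ref{cor 3.2}, using $T^{[n]}(x,y)=1-S^{[n]}(1-x,1-y)$ for the dual t-norm $T$. Your explicit checks of $f_n(x)=1-g_n(1-x)$, of the boundary cases, and of how conditions (i)--(iii) carry over to $T$ fill in the verifications that the paper leaves to the reader.
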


\begin{proposition}\label{pro 3.6}
Let $S:[0,1]^2\rightarrow[0,1]$ be uniquely representable as an ordinal sum of continuous Archimedean t-conorms $(S_\alpha)_{\alpha\in A}$ with $A\neq\emptyset$. If for all $(\frac{i}{n},\frac{j}{n})\in[a_{\alpha},e_{\alpha}]^2$ and $\alpha\in A$, there exists $\frac{p}{n}\in[a_{\alpha},e_{\alpha}]$ such that $S_{\alpha}(\varphi_{\alpha}(\frac{i}{n}),\varphi_{\alpha}(\frac{j}{n}))=\varphi_{\alpha}(\frac{p}{n})$ where $\varphi_{\alpha}(x)=\frac{x-a_{\alpha}}{e_{\alpha}-a_{\alpha}}$ and $i, j, p\in{\{1,2,\dots, n-1, n\}}$, then $S^{[n]}$ given by Eq.\eqref{eq 06} is a t-conorm.
\end{proposition}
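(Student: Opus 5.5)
The plan is to deduce Proposition \ref{pro 3.6} from Theorem \ref{the 3.2} by duality. Let $T(x,y)=1-S(1-x,1-y)$ be the dual t-norm of $S$. We already have $f_n(x)=1-g_n(1-x)$ and $T^{[n]}(x,y)=1-S^{[n]}(1-x,1-y)$. Since associativity is preserved under the involution $x\mapsto 1-x$, it will suffice to show that $T$ satisfies the hypotheses of Theorem \ref{the 3.2}. Then $T^{[n]}$ is a t-norm, so $S^{[n]}$ is associative, and together with Lemma \ref{lem 3.1} this makes $S^{[n]}$ a t-conorm.

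\textbf{Step 1: the ordinal-sum structure of $T$.} If $S$ is the ordinal sum of the continuous Archimedean t-conorms $(S_\alpha)$ on the intervals $[a_\alpha,e_\alpha]$, then $T$ is the ordinal sum of the dual t-norms $T_\alpha(u,v)=1-S_\alpha(1-u,1-v)$. These $T_\alpha$ are continuous and Archimedean, and they live on the reflected intervals $[1-e_\alpha,1-a_\alpha]$. Uniqueness of the representation transfers under this correspondence. Write $\psi_\alpha(x)=\frac{x-(1-e_\alpha)}{e_\alpha-a_\alpha}$ for the normalizing map of the reflected interval. A direct check gives $\psi_\alpha(1-x)=1-\varphi_\alpha(x)$.

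\textbf{Step 2: translating the grid condition.} Take $\frac{i'}{n},\frac{j'}{n}\in[1-e_\alpha,1-a_\alpha]$ and set $i=n-i'$, $j=n-j'$, so that $\frac{i}{n},\frac{j}{n}\in[a_\alpha,e_\alpha]$. Then
\[
T_\alpha\bigl(\psi_\alpha(\tfrac{i'}{n}),\psi_\alpha(\tfrac{j'}{n})\bigr)=1-S_\alpha\bigl(\varphi_\alpha(\tfrac{i}{n}),\varphi_\alpha(\tfrac{j}{n})\bigr)=1-\varphi_\alpha(\tfrac{p}{n})=\psi_\alpha(\tfrac{n-p}{n}).
\]
So the hypothesis of Theorem \ref{the 3.2} holds with $p'=n-p$.

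\textbf{Main obstacle: index ranges.} The index sets do not match exactly. Theorem \ref{the 3.2} uses indices in $\{0,\dots,n-1\}$, while Proposition \ref{pro 3.6} uses $\{1,\dots,n\}$. Under $i\mapsto n-i$ these sets correspond, so in fact they match. The points $x=1$ and $y=1$ are excluded in the t-norm setting and are handled separately by the $\min$ branch of Eq.\eqref{eq 04}. Dually, $0$ is handled by the $\max$ branch of Eq.\eqref{eq 06}.

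One further subtlety is the case $p=0$ or $p'=n$. I would check that in the proof of Theorem \ref{the 3.2} the value $T(\frac{i}{n},\frac{j}{n})=\frac{p}{n}$ is only used to conclude $f_n(\frac pn)=\frac pn$, which also holds for $p'=n$. If this check fails, I would instead repeat the three-case argument of Theorem \ref{the 3.2} directly for $S^{[n]}$, with $g_n$, $\max$ and $S_M$ in place of $f_n$, $\min$ and $T_M$. The three cases are: the indices lie in distinct summands, in two summands, or in one summand.
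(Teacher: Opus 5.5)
Your proof is correct and follows the paper's route: the paper obtains Proposition~\ref{pro 3.6} by dualizing Theorem~\ref{the 3.2} through $T^{[n]}(x,y)=1-S^{[n]}(1-x,1-y)$. Your Steps 1--2 make explicit the translation of the ordinal-sum structure and of the grid hypothesis (via $\psi_\alpha(1-x)=1-\varphi_\alpha(x)$ and $p'=n-p$), which the paper leaves to the reader; your final worry about $p'=n$ is moot, since $p\in\{1,\dots,n\}$ gives $p'\in\{0,\dots,n-1\}$, exactly the range Theorem~\ref{the 3.2} requires.
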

\begin{proposition}\label{pro 3.7}
If $S:[0,1]^2\to[0,1]$ is a strictly monotone t-conorm with a convex additive generator $s$ satisfying $s(\frac{n-2}{n})+s(\frac{2}{n})\leq{s(\frac{n-1}{n})}$,
then $S^{[n]}$ given by Eq.\eqref{eq 06} is never associative for $n\geq{4}$.
\end{proposition}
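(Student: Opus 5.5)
The plan is to reduce the statement to Theorem \ref{the 3.3} by duality, and to keep a direct computation with explicit points as a check.

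\textbf{Reduction by duality.} Let $T(x,y)=1-S(1-x,1-y)$ be the dual t-norm of $S$. Three facts are needed.
\begin{enumerate}
\item[(a)] $T$ is strictly monotone, since $u\mapsto 1-u$ is an order-reversing bijection of $[0,1]$.
\item[(b)] $t(x)=s(1-x)$ is an additive generator of $T$. It is convex, being the composition of a convex function with an affine map.
\item[(c)] The hypothesis transforms correctly. We have $t(\frac{2}{n})=s(\frac{n-2}{n})$, $t(\frac{n-2}{n})=s(\frac{2}{n})$ and $t(\frac{1}{n})=s(\frac{n-1}{n})$. Hence $s(\frac{n-2}{n})+s(\frac{2}{n})\leq s(\frac{n-1}{n})$ is exactly $t(\frac{n-2}{n})+t(\frac{2}{n})\leq t(\frac{1}{n})$.
\end{enumerate}
Theorem \ref{the 3.3} then says $T^{[n]}$ is not associative for $n\geq4$. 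By the identity $T^{[n]}(x,y)=1-S^{[n]}(1-x,1-y)$ recorded before Lemma \ref{lem 3.1}, associativity of $S^{[n]}$ is equivalent to associativity of $T^{[n]}$, since conjugation by $u\mapsto1-u$ preserves the associative law. So $S^{[n]}$ is never associative for $n\geq4$.

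\textbf{Direct check.} As a safeguard against any subtlety in the duality identity (the boundary conventions of $f_n$ versus $g_n$), I would also give the explicit triple. Take $x,y\in(0,\frac1n]$ and $z\in(\frac{n-3}{n},\frac{n-2}{n}]$, so that $g_n(x)=g_n(y)=\frac1n$ and $g_n(z)=\frac{n-2}{n}$.
\begin{enumerate}
\item[(1)] Convexity of $s$ with $s(0)=0$ gives $2s(\frac1n)\leq s(\frac2n)$. Hence $S(\frac1n,\frac1n)=s^{(-1)}(2s(\frac1n))\leq\frac2n$. Strict monotonicity gives $S(\frac1n,\frac1n)>\frac1n$, so $S^{[n]}(x,y)=\frac2n$.
\item[(2)] The hypothesis gives $S(\frac2n,\frac{n-2}{n})\leq\frac{n-1}{n}$, and strict monotonicity gives $S(\frac2n,\frac{n-2}{n})>\frac{n-2}{n}$. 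Thus $S^{[n]}(S^{[n]}(x,y),z)=\frac{n-1}{n}$.
\item[(3)] On the other side, $S(\frac1n,\frac{n-2}{n})>\frac{n-2}{n}$, so $S^{[n]}(y,z)\geq\frac{n-1}{n}$.
\item[(4)] Since $S(\frac1n,\frac{n-1}{n})>\frac{n-1}{n}$, monotonicity gives $S^{[n]}(x,S^{[n]}(y,z))=1$.
\end{enumerate}
So the two bracketings give $\frac{n-1}{n}$ and $1$, which differ.

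\textbf{Main obstacle.} The delicate step is the use of the pseudo-inverse $s^{(-1)}$ in (1) and (2). There I need that $s^{(-1)}$ is non-decreasing and that $s^{(-1)}(s(a))=a$. Both hold for an additive generator of a t-conorm by the dual of (3.16) of \cite{EP2000}, exactly as they were used for $t^{(-1)}$ in the proof of Theorem \ref{the 3.3}. I expect everything else to be routine bookkeeping with the piecewise description of $g_n$.
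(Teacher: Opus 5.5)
Your proposal is correct and follows the paper's proof: dualize to $T$ with generator $t(x)=s(1-x)$, transfer strict monotonicity, convexity and the generator inequality, apply Theorem \ref{the 3.3}, and carry non-associativity back through $T^{[n]}(x,y)=1-S^{[n]}(1-x,1-y)$. Your direct check with $x,y\in(0,\frac1n]$ and $z\in(\frac{n-3}{n},\frac{n-2}{n}]$ is the dual of the triple used in Theorem \ref{the 3.3}. It is also correct, giving $\frac{n-1}{n}$ versus $1$, and it makes explicit what the paper leaves implicit, but it is not needed for the argument.
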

\begin{proof}
We take $T$ as the strictly monotone t-norm dual to the t-conorm $S$. Since $s$ is convex, it is easy to check that the additive generator $t$ of $T$ is convex. Since $t$ is dual to $s$, we have $t(\frac{n-2}{n})+t(\frac{2}{n})\leq{t(\frac{1}{n})}$.  Then by Theorem \ref{the 3.3}, $T^{[n]}$ is never associative for $n\geq{4}$. Since $S^{[n]}$ is dual to $T^{[n]}$, $S^{[n]}$ given by Eq.\eqref{eq 06} is never associative for $n\geq{4}$.
\end{proof}

\section{Conclusions}

In this article, we first identified a gap in a previous result concerning strictly monotone t-norms and showed that strict monotonicity alone does not imply the non-associativity of $T^{[n]}$. In particular, the product t-norm provides a counterexample for $n=2, 3$. We then established some sufficient conditions for $T^{[n]}$ to be a t-norm. In particular, for a broad class of t-norms, including Archimedean and strictly monotone t-norms, the discretizations $T^{[2]}$ and $T^{[3]}$ are associative. We also provided a condition for $T^{[n]}$ to be never associative when $T$ is strictly monotone. These results clarify how discretization affects the algebraic structure of t-norms and t-conorms, and in particular reveal a fundamental difference between the cases $n=2,3$ and $n\geq4$. Regrettably, this article just partly answers the two problem presented in Remark \ref{rem 2.3}.


\end{document}